\theoremstyle{plain}
\newtheorem{theorem}{Theorem}[section]
\newtheorem{lemma}[theorem]{Lemma}
\newtheorem{corollary}[theorem]{Corollary}
\theoremstyle{definition}
\newtheorem{definition}[theorem]{Definition}
\newtheorem{remark}[theorem]{Remark}
\newcommand{\R}{\mathbb{R}}
\newcommand{\N}{\mathbb{N}}
\newcommand{\D}{\mathcal{D}}
\newcommand{\be}{\begin{equation}}
\newcommand{\ee}{\end{equation}}
\newcommand{\eqn}[1]{\begin{align*}#1\end{align*}}
\newcommand{\tand}{\text{ and }}
\newcommand{\stat}{\text{SN}}
\newcommand{\odd}{\text{ odd }}
\newcommand{\even}{\text{ even }}
\newcommand{\ohalf}{\frac{1}{2}}
\newcommand{\snake}{\textit{Snake}}
\newcommand{\povn}{\frac{\pi}{n}}
\begin{document}

\title{What can you draw?}



\author{Florian Frick}
\address[FF]{Dept.\ Math.\ Sciences, Carnegie Mellon University, Pittsburgh, PA 15213, USA}
\email{frick@cmu.edu} 

\author{Fei Peng}
\address[FP]{Dept.\ Math.\ Sciences, Carnegie Mellon University, Pittsburgh, PA 15213, USA}
\email{ief@cmu.edu}


\begin{abstract}
\small
We address the problem of which planar sets can be drawn with a pencil and eraser. The pencil draws any union of black open unit disks in the plane~$\R^2$. The eraser produces any union of white open unit disks. You may switch tools as many times as desired. Our main result is that drawability cannot be characterized by local obstructions: A bounded set can be locally drawable, while not being drawable. We also show that if drawable sets are defined using closed unit disks the cardinality of the collection of drawable sets is strictly larger compared with the definition involving open unit disks.
\end{abstract}

\date{April 2, 2020}
\maketitle

\section{Introduction}

The second author raised the following deceptively simple question: What can you draw? Your canvas is the plane~$\R^2$---colored white to begin with---and you are given two tools to draw with: a pencil (or brush), which produces a black unit disk wherever it meets the canvas, and an eraser, which produces a white unit disk. There are no further restrictions on your artistic freedom: You may raise the tool off the canvas, that is, there is no continuity requirement for the centers of disks you draw, and you can switch tools as many times as desired. 

More precisely, for a set $A \subset \R^2$ denote its open $1$-neighborhood, the union of all open unit disks with center in~$A$, by 
$$N(A) = \{x \in \R^2 \: : \: |x-a| < 1 \ \text{for some} \ a \in A\}.$$
A subset of the plane that can be drawn without the use of the eraser is of the form~$N(A_1) = D_1$ for some $A_1 \subset \R^2$. We can now ``erase'' the set $N(A_2)$ for some $A_2 \subset \R^2$ to obtain $D_2 = N(A_1) \setminus N(A_2)$. Using the pencil a second time we can draw any set of the form $D_3 = (N(A_1) \setminus N(A_2)) \cup N(A_3)$, from which we can erase $N(A_4)$ to produce~$D_4$, and so on. 
We say that we produced the set $D_k$ after $k$ steps.
Denote by $\D_1$ the sets we can draw in one step: The collection of sets $N(A_1)$ for $A_1\subseteq \R^2$. Similarly, $\D_2=\{N(A_1)\setminus N(A_2): A_1, A_2 \subset \R^2\}$. In general, $$\D_n=\begin{cases}\{D\cup N(A_n): D\in \D_{n-1}, A_n\subseteq \R^2\}&(n\text{ is odd})\\\{D\setminus N(A_n): D\in \D_{n-1}, A_n\subseteq \R^2\}&(n\text{ is even})\end{cases}.$$

We are interested in the collection of drawable sets $\D = \bigcup_{n = 1}^\infty \D_n$. We will refer to any set in $\D$ as \emph{drawable}. For $A\in \D$, its presence in $\D$ will be \emph{witnessed} by some~$A_1,\dots,A_n$ in the above form, namely: $$A=((((N(A_1) \setminus N(A_2))\cup N(A_3))\setminus N(A_4))\dots$$

\begin{figure}[H]
    \centering
    \includegraphics[scale=0.46]{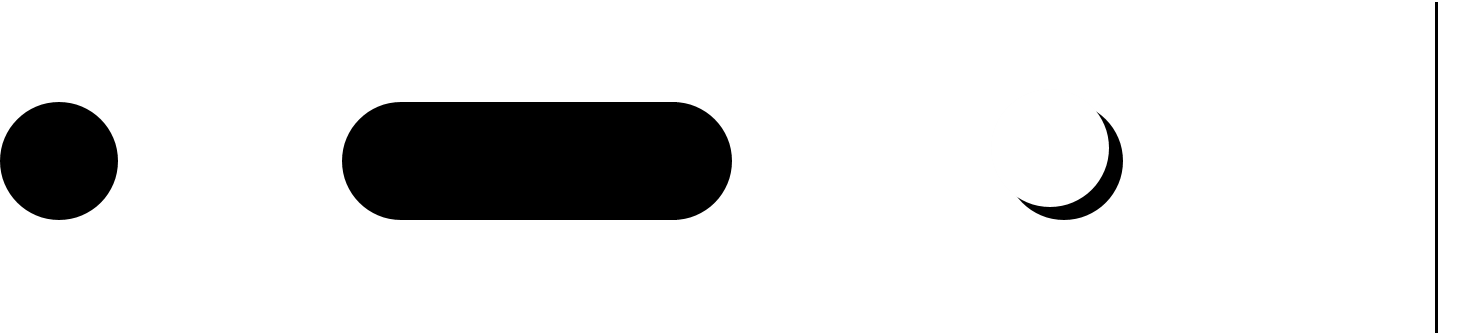}
    \caption{Four simple examples of drawable sets.}
    \label{fig:easyg}
\end{figure}

The choice that our drawing tools produce open unit disks (instead of closed unit disks) is arbitrary and we will investigate a second model of drawable sets, where open $1$-neighborhoods are replaced by their non-strict counterparts
$$N_{\le}(A) = \{x \in \R^2 \: : \: |x-a| \le 1 \ \text{for some} \ a \in A\}.$$
We avoid the terminology closed $1$-neighborhood since $N_{\le}(A)$ is not necessarily a closed set, for example if $A$ is an open unit disk. Replacing each $N(A_j)$ by $N_{\le}(A_j)$ in the definition of $\D$ we get the collection of \emph{closed-disk drawable} sets~$\D_{\le}$.

We can make some observations about drawable sets, such as every closed convex set is drawable and any convex set is closed-disk drawable; see Section~\ref{sec:convex} for the simple proofs. The purpose of the present manuscript is to derive a more surprising phenomenon, namely that being a drawable set is not a local condition. First, we mention that local obstructions to drawability exist:

\begin{theorem}
\label{thm:chessboard}
    A $2 \times 2$ chessboard, that is, the set $[-1,0] \times [-1,0] \cup [0,1] \times [0,1]$, is neither drawable nor closed-disk drawable.
\end{theorem}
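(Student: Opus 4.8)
The plan is to show that (closed-disk) drawability forces a purely local constraint at every point and that the common corner $o=(0,0)$ of the two squares violates it. For a set $S\subseteq\R^2$ and a point $o$, let $T_o(S)\subseteq S^1$ be the set of unit \emph{tangent directions} at $o$, i.e.\ the limits $v=\lim_m \frac{x_m-o}{|x_m-o|}$ along sequences $x_m\in S$ with $x_m\to o$. Three elementary facts will drive everything: $T_o(S_1\cup S_2)=T_o(S_1)\cup T_o(S_2)$; $T_o(S_1\cap S_2)\subseteq T_o(S_1)\cap T_o(S_2)$; and $T_o(S)\cup T_o(\R^2\setminus S)=S^1$. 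I would then isolate the invariant I claim holds for every drawable and every closed-disk drawable set $D$ at every point $o$: \emph{at least one of $T_o(D)$ and $T_o(\R^2\setminus D)$ is contained in a closed half-circle} (equivalently, lies in a closed half-plane through $o$). Call such an $o$ \emph{tame}. For the chessboard at $o$ one computes $T_o(C)=\{(x,y):xy\ge0\}\cap S^1$ and $T_o(\R^2\setminus C)=\{xy\le0\}\cap S^1$; each is a union of two antipodal quarter-arcs and contains all four axis directions, so neither lies in a half-circle. Thus $o$ is not tame, and it suffices to prove that every drawable set is tame everywhere.

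The base case rests on a rolling-disk lemma. If $o\in\partial N(A)$ then $\operatorname{dist}(o,A)=1$ and a near-minimizing sequence in $A$ has a limit $a_0\in\overline A$ with $|o-a_0|=1$; the open disk $B(a_0,1)$ lies in $N(A)$ and $o$ sits on its boundary circle. Since every point of $\R^2\setminus N(A)$ has distance $\ge1$ from $a_0$, the complement lies in the closed exterior $\{|x-a_0|\ge1\}$, whose tangent cone at $o$ is exactly the closed half-circle $\{v:\langle v,a_0-o\rangle\le0\}$. Hence $T_o(\R^2\setminus N(A))$ lies in a half-circle and $N(A)$ is tame everywhere, the cases $o\in\operatorname{int}N(A)$ and $o\notin\overline{N(A)}$ being trivial. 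I would check that the identical argument applies to $N_{\le}(A)$: from $o\in\partial N_\le(A)$ one again extracts $a_0\in\overline A$ with $|o-a_0|=1$ (if $|o-a_0|<1$ then a whole open disk around $o$ lies in $N_\le(A)$, contradicting $o\in\partial N_\le(A)$), and the complement again avoids $B(a_0,1)$. So the lemma, and hence the whole argument, is uniform across the two models.

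The inductive step is where the choice of invariant pays off, and I expect it to need the most care. Suppose $D$ is tame and let $U$ be the next neighborhood; write $D'=D\cup U$ for a pencil step and $D'=D\cap(\R^2\setminus U)$ for an eraser step. If $o\in\operatorname{int}U$ then $o$ becomes interior to $D'$ (pencil) or to $\R^2\setminus D'$ (eraser), so one tangent cone is empty; if $o\notin\overline U$ then $D'$ agrees with $D$ near $o$ and tameness is inherited. The essential case is $o\in\partial U$, where the rolling-disk lemma gives $T_o(\R^2\setminus U)\subseteq$ a half-circle. For a pencil step $U\subseteq D'$ forces $\R^2\setminus D'\subseteq\R^2\setminus U$, whence $T_o(\R^2\setminus D')\subseteq T_o(\R^2\setminus U)$ lies in a half-circle; for an eraser step $D'\subseteq\R^2\setminus U$ forces $T_o(D')\subseteq T_o(\R^2\setminus U)$, again a half-circle. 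Either way $o$ is tame for $D'$, so by induction every set in $\D$ and in $\D_{\le}$ is tame everywhere, contradicting the non-tameness of the chessboard corner.

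The conceptual obstacle is not any single estimate but finding an invariant that survives both operations. The obvious guess---``$D$ has an internal tangent unit disk at each boundary point''---holds after a pencil stroke but is destroyed by the eraser, and vice versa. The self-dual formulation in terms of \emph{both} $T_o(D)$ and $T_o(\R^2\setminus D)$ repairs this: each operation can spoil the half-circle bound on only one of the two cones, while the rolling-disk lemma applied to the freshly drawn $U$ simultaneously reinstates the bound on the other. Verifying the three tangent-cone identities and the attainment of $a_0$ in the two models are the only routine points left to fill in.
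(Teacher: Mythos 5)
Your argument is correct, and it is genuinely different from the one in the paper. The paper proves non-drawability of the chessboard by infinite descent on the ``stationary number'' $\stat(x)$ (the last step at which the color of $x$ changes): it builds a nested sequence of eight-point black/white configurations shrinking to the corner, shows via a circumradius computation for isosceles trapezoids (Lemma~\ref{lem:trape}) that each configuration \emph{encircles} the next in the sense of Definition~\ref{defi:encircle}, and concludes from Lemma~\ref{lem:maxsn} that the maxima of the stationary numbers form an infinite strictly decreasing sequence of positive integers. You instead isolate a local invariant --- at every point, at least one of the two tangent cones $T_o(D)$, $T_o(\R^2\setminus D)$ lies in a closed half-circle --- and verify that a single pencil or eraser stroke restores it at every boundary point of the freshly applied neighborhood (via the supporting-disk fact that $\R^2\setminus N(A)\subseteq\{|x-a_0|\ge 1\}$ for some $a_0\in\overline A$ with $|o-a_0|=1$), while it persists trivially elsewhere; the chessboard corner, whose two cones each contain all four axis directions, violates it. I checked the details: the monotonicity of $T_o$ under inclusion, the extraction of $a_0$ in both the open- and closed-disk models, and the exhaustiveness of the three cases $o\in\operatorname{int}U$, $o\in\partial U$, $o\notin\overline U$ all hold, so the induction closes. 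Your route is cleaner and more conceptual for this particular theorem (no trapezoid computation, no infinite descent), and it yields a crisp necessary condition for drawability. What the paper's heavier encirclement/stationary-number machinery buys is reusability: the same Lemma~\ref{lem:maxsn} drives the \emph{global} obstruction of Theorem~\ref{thm:totdisundraw} and hence the snake of Theorem~\ref{thm:curv1}, which is locally drawable and therefore invisible to any purely local invariant such as your tameness condition.
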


Call a set $B \subset \R^2$ \emph{locally drawable} if every point $x \in \R^2$ has a neighborhood $U$ such that there is a drawable set $D\in \D$ such that $U \cap B$ is equal to $U \cap D$. That is, if we zoom in close to any point in~$D$, the part of the set we see is indistinguishable from a drawable set. Clearly, any drawable set is locally drawable. 

The left image of Figure~\ref{fig:moreimposg} shows a simple example of a set that is locally drawable, but not drawable: Round off the corners of a $2\times 2$ chessboard to separate the two black squares of the chessboard, thus making it locally drawable. If this smoothing is sufficiently sharp, that is, we round off with a curve of curvature strictly larger than one, any unit disk touching the curve from the inside of the black region will extend past the curve. We thus need to use the eraser to achieve this curvature, but the eraser will interfere with the other black region. So neither black region can be drawn last. This is a quick outline of a proof that such a chessboard with rounded corners is not drawable. We find this unsatisfactory, as it feels that we won on a technicality: First, we made the boundary of our drawing so sharp that the pencil does not fit into it; second, the obstruction is still somewhat local, that is, the two black regions need to be close enough that erasing around one region interferes with the other.

Here we rectify both of these shortcomings. We construct an example of a simple closed curve in the plane with curvature less than one everywhere (so that pencil and eraser can locally approximate it from either side), such that the region bounded by it is not drawable; see Theorem~\ref{thm:curv1}. And we identify truly global obstructions to drawability; for given $r>0$ we construct obstructions to drawability that are found in an annulus of inradius~$r$ (and depend on the annulus closing up). We need additional language for a precise statement, which we thus postpone to Theorem~\ref{thm:totdisundraw}. The general obstruction we exhibit to prove Theorem~\ref{thm:totdisundraw} is the same used to prove the following:

\begin{theorem}\label{thm:curv1}
    There is a Jordan loop $\gamma$ in the plane with curvature strictly between $-1$ and~$1$, such that the interior region $R$ of $\gamma$ is neither drawable nor closed-disk drawable. However, $R$ is locally drawable and locally closed-disk drawable.
\end{theorem}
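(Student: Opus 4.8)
\emph{Strategy and construction.} The plan is to treat the two claims by completely different means: local drawability is a soft consequence of the curvature bound, whereas non-drawability is the real content and rests on a global argument. First I would build $\gamma$ explicitly as the boundary of a thin, slit annular band. Fix an inradius $r>1$ and take the region between the circles of radius $r$ and $r+w$ with $w<2$, then remove a thin radial wedge (of angular width small enough that the slit gap stays below $2$) so that the resulting region $R$ is simply connected and hence bounded by a Jordan loop; finally round the slit ends and the band cap with arcs of radius $>1$. Since $r>1$ the inner wall has curvature $-1/r\in(-1,0)$, the outer wall has curvature $1/(r+w)\in(0,1)$, and the rounding radii $>1$ keep the curvature of $\gamma$ strictly inside $(-1,1)$ everywhere. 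The two features I will exploit are that the band is \emph{everywhere thinner than the disk diameter $2$} and that it \emph{closes up} all the way around, the thin slit being the only interruption.

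\emph{Local drawability.} This is the routine direction. Near any boundary point $p$, choose coordinates so that the tangent is horizontal and $\gamma$ is a graph $y=h(x)$ with $h(0)=h'(0)=0$ and $h''(0)=\kappa(p)\in(-1,1)$, with $R$ on the side $y>h(x)$ inside a small neighborhood $U$. In $U$ the set $R$ agrees with the ``half-space model'' $H=\{y>h(x)\}$, and the thinness of the band is invisible at this scale. When $\kappa(p)\ge 0$ the region $H$ is convex with boundary curvature $<1$, so its interior is covered by inscribed open unit disks up to the boundary and $H=N(A)$ is drawable in one pencil stroke; when $\kappa(p)<0$ the complementary side is convex with curvature $<1$, so $H=\R^2\setminus N(B)$ is drawable as a pencil stroke followed by an eraser stroke. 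In either case the disks are allowed to overflow outside $U$ (indeed they must, since they protrude through the opposite wall), and $U\cap R=U\cap H=U\cap D$ for the drawable $D$; interior and exterior points are handled by $D=\R^2$ and $D=\emptyset$. Replacing $N$ by $N_{\le}$ throughout gives the identical conclusion for $\D_{\le}$, so $R$ is locally drawable and locally closed-disk drawable.

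\emph{Non-drawability.} Suppose $R$ were drawable, witnessed by $A_1,\dots,A_n$. The governing fact is that no open unit disk fits inside $R$: any such disk would have to span the radial width $w<2$ of the band or the angular gap $<2$ of the slit. Consequently no point of $\gamma$ can be ``finished'' by a pencil stroke whose generating disk stays inside $R$; every pencil stroke that deposits black surviving in $R$ does so with a unit disk that protrudes through a wall, and that protrusion, lying in $R^c$, must be removed by a strictly later eraser. Dually, each eraser stroke that carves a concave wall of the thin band removes, on the far side of the band or of the slit, black that belongs to $R$ and so must be restored by a strictly later pencil. I would organize these observations into a dependency relation on the operation indices $1,\dots,n$, read off along $\gamma$: ``the piece at $p$ is finished strictly after the piece across the thin gap from $p$.'' Because the band has no fat place where the relation can be discharged and because it closes up, following this relation once around $\gamma$ produces a strictly increasing cyclic chain of indices, which is impossible. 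This is precisely the global obstruction developed for Theorem~\ref{thm:totdisundraw}; the curvature hypothesis plays no role here, serving only to guarantee local drawability and to keep the walls genuinely convex or concave. The argument uses only the strict metric inequality ``width $<2$'' and is therefore insensitive to the passage from $N$ to $N_{\le}$, so $R$ is not closed-disk drawable either.

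\emph{Main obstacle.} The hard part is the last paragraph, and within it the step that upgrades the local ``each protrusion must be erased later / each carved wall must be refilled later'' statements into a single consistent orientation of the band that admits no source, sink, or shortcut. The delicate points are showing that a pencil cannot refill an erased concave wall without re-creating the slit (a refilled wall would be convex, not concave), that an eraser cannot be localized to one side of a band thinner than $2$, and that the rounded slit ends and band cap, being themselves thinner than $2$, offer no place to break the cycle. Turning this heuristic monodromy into a rigorous cyclic contradiction is where essentially all the work lies; everything else is bookkeeping.
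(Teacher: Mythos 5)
There is a genuine gap, and it lies exactly where you locate it yourself: the ``cyclic monodromy'' around the band does not exist for your construction. The dependency relation you set up points \emph{transversally} across the band (a pencil stroke finishing a black band point protrudes into the inner hole or the outer region; an eraser finishing a nearby white point protrudes into the band), not \emph{longitudinally} along it, so following it does not take you around $\gamma$ at all --- it takes you back and forth across the width, and it terminates as soon as it lands in a fat white region. Concretely, since $r>1$ the inner hole and the exterior each contain unit disks reaching every one of their points, so any black spilled into them can be erased by disks lying entirely inside them, never touching the band again. This actually makes (a closed version of) your region drawable: color the plane black; erase the open $1$-neighborhood of the slit; re-blacken every band point with an open unit disk separated from the convex hull of the slit by a supporting line (possible because the band is disjoint from that convex hull and the walls are straight or rounded with radius $>1$); finally erase $\{|x|<r\}\cup\{|x|>r+w\}$ exactly, which is a single union of open unit disks disjoint from the band. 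If you instead take $R$ open, there is an obstruction, but it is the mutual-encirclement technicality of a thin open set and its boundary: it applies verbatim to an open strip of width $w<2$, so it does not use the band closing up, and it evaporates in the closed-disk model --- $N_{\le}(\R^2)$ followed by erasing $N_{\le}(\{|c|\le r-1\}\cup\{|c|\ge r+w+1\})$ plus the slit produces the open band in two steps --- so your example cannot yield ``neither drawable nor closed-disk drawable.''

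The ingredient you are missing is \emph{color alternation}. Lemma~\ref{lem:maxsn} transfers stationary numbers only between sets of \emph{opposite} colors, so to close a cycle you need thin black and thin white tongues interleaved so that each constrains the next; a uniformly black band with uniformly white surroundings gives only a two-step chain that dies in the fat white regions. The paper's snake is engineered precisely for this: twelve rays from a center $O$ carry alternating clockwise/counterclockwise dissections (thin black rectangle on one side of each ray, thin white on the other, with orientations flipping between consecutive rays), and the condition $a<\cot(\pi/n)$ in Theorem~\ref{thm:totdisundraw} makes the four white points flanking two adjacent rays encircle black points slightly farther out along those rays, producing an infinite descent of stationary numbers. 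Note also that the snake deliberately has \emph{fat} parts --- its lobes are bounded by arcs of radius $1.001$, so unit disks do fit inside it and inside its complement near the action --- which is what makes the obstruction genuinely global rather than the ``too thin for the pencil'' technicality your band relies on. If you want to salvage your outline, you would need to replace the single band by interlocking black and white thin fingers arranged cyclically around a point, at which point you have rediscovered the total $n$-dissection.
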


\begin{figure}[H]
     \centering
     \begin{tikzpicture}
         \clip (-1.2,-1.22) rectangle (15.3,1.2);
         \fill [black] (0,0) rectangle (1cm,1cm);
         \fill [black] (-1cm,-1cm) rectangle (0,0);
         \node[text width=5cm,align=center] at (3.3cm,0)  {Can be open or closed\\$\notin$  $\D$, $\notin$ $\D_{\le}$\\Not locally drawable};
         \node[inner sep=0pt] (whitehead) at (9cm, 0)
             {\includegraphics[height=2.5cm]{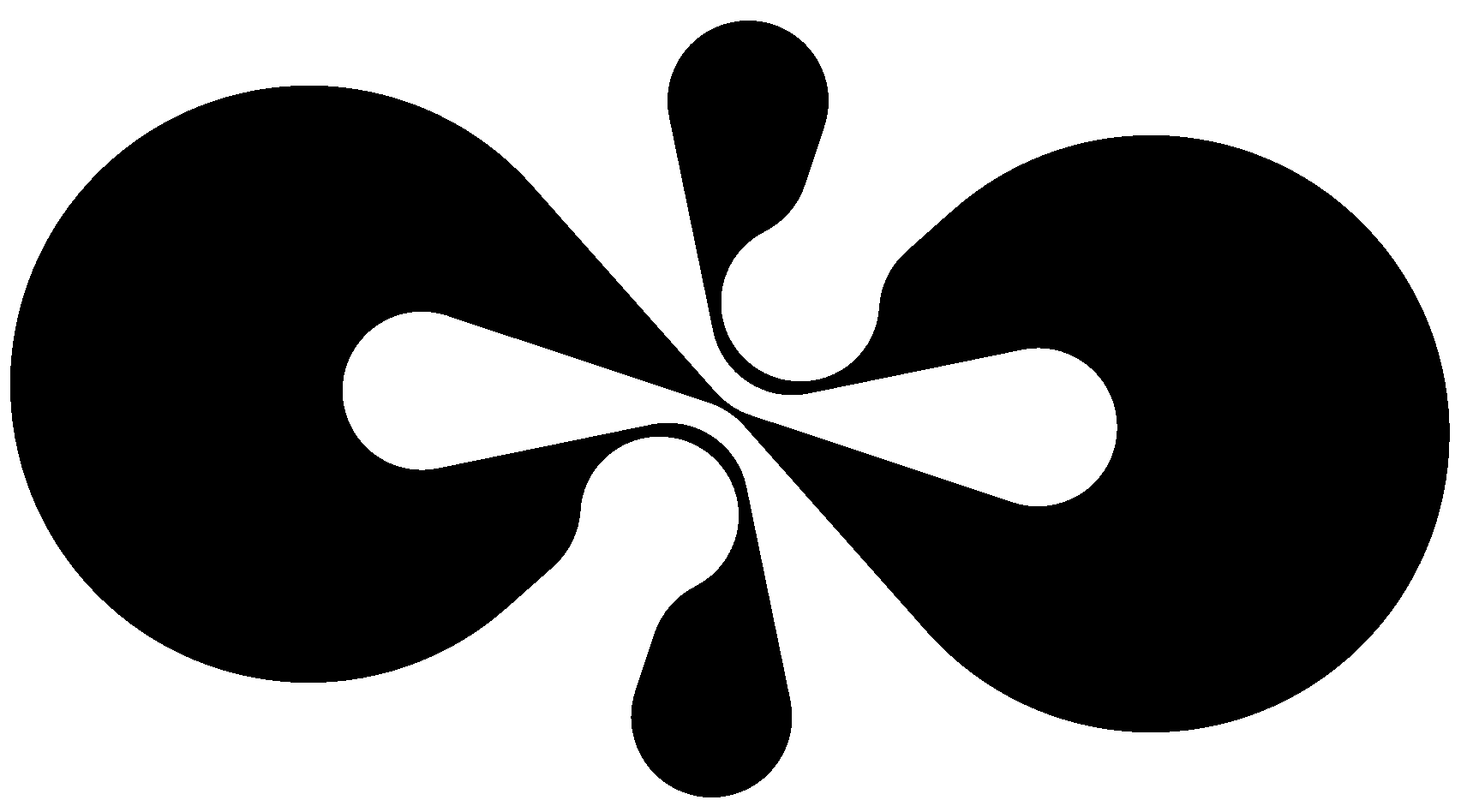}};
         \node[text width=5cm,align=center] at (13.3cm,0)  {Can be open or closed\\Bounded; curvature $<$ 1\\$\notin$ $\D$, $\notin$  $\D_{\le}$\\Is locally drawable};
     \end{tikzpicture}
     \caption{An undrawable $2 \times 2$ chessboard and a Jordan curve of curvature $< 1$ that bounds a locally drawable, yet undrawable region, the ``snake.''}
     \label{fig:imposg}
\end{figure}

A set bounded by a Jordan loop with curvature strictly between $-1$ and $1$ is locally drawable (and locally closed-disk drawable); see Theorem~\ref{thm:curvature_bound}. This is because we may approximate the curve from either side with disk of radius at least one, and thus pencil and eraser ``fit into'' the curve. This is Blaschke's rolling ball theorem~{\cite[p.~114]{blaschke1916}} that a unit disk fits into any convex curve with curvature at most one.

Any drawable set is a Borel set, that is, in the $\sigma$-algebra generated by open sets in the plane, 
and Theorem~\ref{thm:chessboard} shows the existence of Borel sets that are not drawable. Here we show:

\begin{theorem}
\label{thm:closed-disk}
    Any closed-disk drawable set is a Lebesgue set. Not every Lebesgue subset of $\R^2$ is closed-disk drawable, but $\D_{\le}$ has the same cardinality as the set of Lebesgue subsets of~$\R^2$. In particular, $|\D_{\le}| > |\D|$.
\end{theorem}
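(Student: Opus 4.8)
The plan is to prove the four assertions in turn, with the Lebesgue measurability of $N_{\le}(A)$ as the technical heart. Throughout write $\mathfrak{c}=2^{\aleph_0}$.

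\textbf{Measurability of $N_{\le}(A)$.} I would first show $N_{\le}(A)$ is Lebesgue measurable for every $A\subseteq\R^2$; since the Lebesgue sets form a $\sigma$-algebra, in particular closed under finite unions and set differences, this forces every set in $\D_{\le}$ to be Lebesgue, giving the upper bound $|\D_{\le}|\le 2^{\mathfrak c}$. Let $f(x)=d(x,A)=\inf_{a\in A}|x-a|$ be the (1-Lipschitz, hence continuous) distance function, so that $N(A)=\{f<1\}$ is open. Any point $x\in N_{\le}(A)\setminus N(A)$ satisfies $f(x)\ge 1$ yet admits some $a\in A$ with $|x-a|\le 1$, so $f(x)=1$ with the infimum attained; thus $N_{\le}(A)=N(A)\cup E$ with $E\subseteq\{f=1\}$. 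It therefore suffices to prove the key lemma that the level set $\{f=1\}$ is Lebesgue null: then $E$ is a subset of a null Borel set, hence measurable by completeness, and $N_{\le}(A)$ is the union of an open set and a null set.

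\textbf{The key lemma (the main obstacle).} I expect the crux to be showing $\{f=1\}$ has measure zero. Suppose for contradiction it has positive measure. By Rademacher's theorem $f$ is differentiable almost everywhere, and wherever it is differentiable with $f(x)>0$ one has $|\nabla f(x)|=1$, since the gradient points away from the (unique) nearest point of $\overline{A}$. On the other hand, by the Lebesgue density theorem almost every point $x_0$ of $\{f=1\}$ is a density point. At such an $x_0$ where $f$ is also differentiable, the gradient must vanish: if $\nabla f(x_0)=v\neq 0$, then $f(x)=1+\langle v,x-x_0\rangle+o(|x-x_0|)$ gives $f>1$ throughout a cone of directions about $+v$ occupying a positive density fraction of small balls around $x_0$, so $\{f=1\}$ has density strictly below $1$ at $x_0$, a contradiction. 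Combining, a positive-measure set of points would satisfy both $|\nabla f|=1$ and $\nabla f=0$, which is impossible. Hence $\{f=1\}$ is null, completing the measurability argument.

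\textbf{Not every Lebesgue set is closed-disk drawable.} This is immediate from Theorem~\ref{thm:chessboard}: the $2\times 2$ chessboard is a closed set, hence Lebesgue, but is not closed-disk drawable.

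\textbf{Cardinalities and the strict inequality.} For the lower bound I would exhibit $2^{\mathfrak c}$ distinct sets already in the first step of the closed-disk construction. For $X\subseteq\R$ set $A_X=\{(t,0):t\in X\}$. Since $|(s,1)-(t,0)|=\sqrt{(s-t)^2+1}\le 1$ holds only when $s=t$, the witness point $(s,1)$ lies in $N_{\le}(A_X)$ if and only if $s\in X$; thus distinct $X$ yield distinct $N_{\le}(A_X)\in\D_{\le}$, producing $2^{|\R|}=2^{\mathfrak c}$ closed-disk drawable sets. Together with the measurability upper bound this gives $|\D_{\le}|=2^{\mathfrak c}$, which equals the cardinality of the Lebesgue $\sigma$-algebra (that cardinality is $2^{\mathfrak c}$ because every subset of a measure-zero Cantor set is Lebesgue null, while $|\D_{\le}|\le|P(\R^2)|=2^{\mathfrak c}$). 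Finally, each $N(A_i)$ is an open subset of the second-countable space $\R^2$, of which there are only $\mathfrak c$; since every member of $\D$ is a finite Boolean combination of such open sets, $|\D_n|\le\mathfrak c^{\,n}=\mathfrak c$ for each $n$ and hence $|\D|\le\aleph_0\cdot\mathfrak c=\mathfrak c$, with the open unit disks giving $|\D|\ge\mathfrak c$. Therefore $|\D|=\mathfrak c<2^{\mathfrak c}=|\D_{\le}|$, which is the desired strict inequality.
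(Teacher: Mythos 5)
Your proposal is correct, and it reaches all four assertions, but it takes a genuinely different route from the paper in two places. For measurability, the paper simply cites Balcerzak--Kharazishvili for the fact that an arbitrary union of closed unit disks is a Lebesgue set and then invokes the $\sigma$-algebra property; you instead reprove that fact from scratch by writing $N_{\le}(A) = N(A) \cup E$ with $E \subseteq \{d(\cdot,A)=1\}$ and showing the level set is null via Rademacher plus the Lebesgue density theorem (almost every point of a positive-measure level set would have to satisfy both $|\nabla f|=1$, from the segment to a nearest point of $\overline{A}$, and $\nabla f=0$, from the density argument). This is a clean, self-contained argument that buys independence from the reference at the cost of some real-analysis machinery. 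For the cardinality lower bound, the paper routes through the fact that every convex set is closed-disk drawable and that there are $2^{\mathfrak c}$ convex sets; your one-step family $N_{\le}(A_X)$ for $X\subseteq\R$, distinguished by the witness points $(s,1)$, is more direct and avoids the convexity theorem entirely. Your bound $|\D|\le\mathfrak c$ by counting open sets (rather than the paper's appeal to the cardinality of the Borel $\sigma$-algebra) is also more elementary and equally valid. Two cosmetic remarks: you should dispose of the trivial case $A=\emptyset$ before invoking the distance function, and note that your reliance on Theorem~\ref{thm:chessboard} for the ``not every Lebesgue set'' claim is a forward reference (harmless, since that proof does not depend on this theorem); the paper instead gives the self-contained infinite-checkerboard example, observing that every nonempty set in $\D_{\le}$ must contain a closed unit disk in itself or in its complement.
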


The first part is an immediate consequence of~\cite{balcerzak1999uncountable}. While the two models of what it means to be a drawable set are very similar---using open unit disks versus closed unit disks---the model where drawing tools leave a closed unit disk produces a larger cardinality of drawable sets.

To the authors' knowledge the notion of drawability has not been investigated earlier. There is, however, the related concept of Dynkin system: A non-empty family of subsets of a set $X$ is called \emph{Dynkin system} if it is closed under taking complements and countable disjoint unions. Keleti~\cite{keleti1999} showed that the Dynkin system generated by open balls of radius at least one in~$\R^d$, $d\ge 3$, does not contain all Borel sets. Keleti and Preiss~\cite{keleti2000} showed that the Dynkin system generated by all open balls in a separable infinite-dimensional Hilbert space does not contain all Borel sets. Finally, Zelen\'y~\cite{Zeleny} showed that the Dynkin system generated by balls in $\R^d$ contains all Borel sets.

\begin{figure}[H]
     \centering
     \begin{tikzpicture}
         \clip (-1.2,-1.22) rectangle (15.3,1.2);
         \fill [black] (0,0) rectangle (1cm,1cm);
         \fill [black] (-1cm,-1cm) rectangle (0,0);
         \fill [white] (-0.35cm,-0.35cm) rectangle (0.35cm,0.35cm);
         \fill [black!70!gray] (-0.35cm,-0.35cm) circle (0.35cm);
         \fill [black!70!gray] (0.35cm,0.35cm) circle (0.35cm);
         \draw [yellow,->] (0.35cm,0.35cm) -- (0,0.35cm);
         \node[yellow] at (0.2cm, 0.52cm) {\tiny $r$};
         \node[text width=5cm,align=center] at (3.3cm,0)  {($r$ $<$ $1$)\\Can be open or closed\\$\notin$  $\D$, $\notin$ $\D_{\le}$\\Is locally  drawable};
         \node[inner sep=0pt] (whitehead) at (9.3cm, 0)
             {\includegraphics[height=2.4cm]{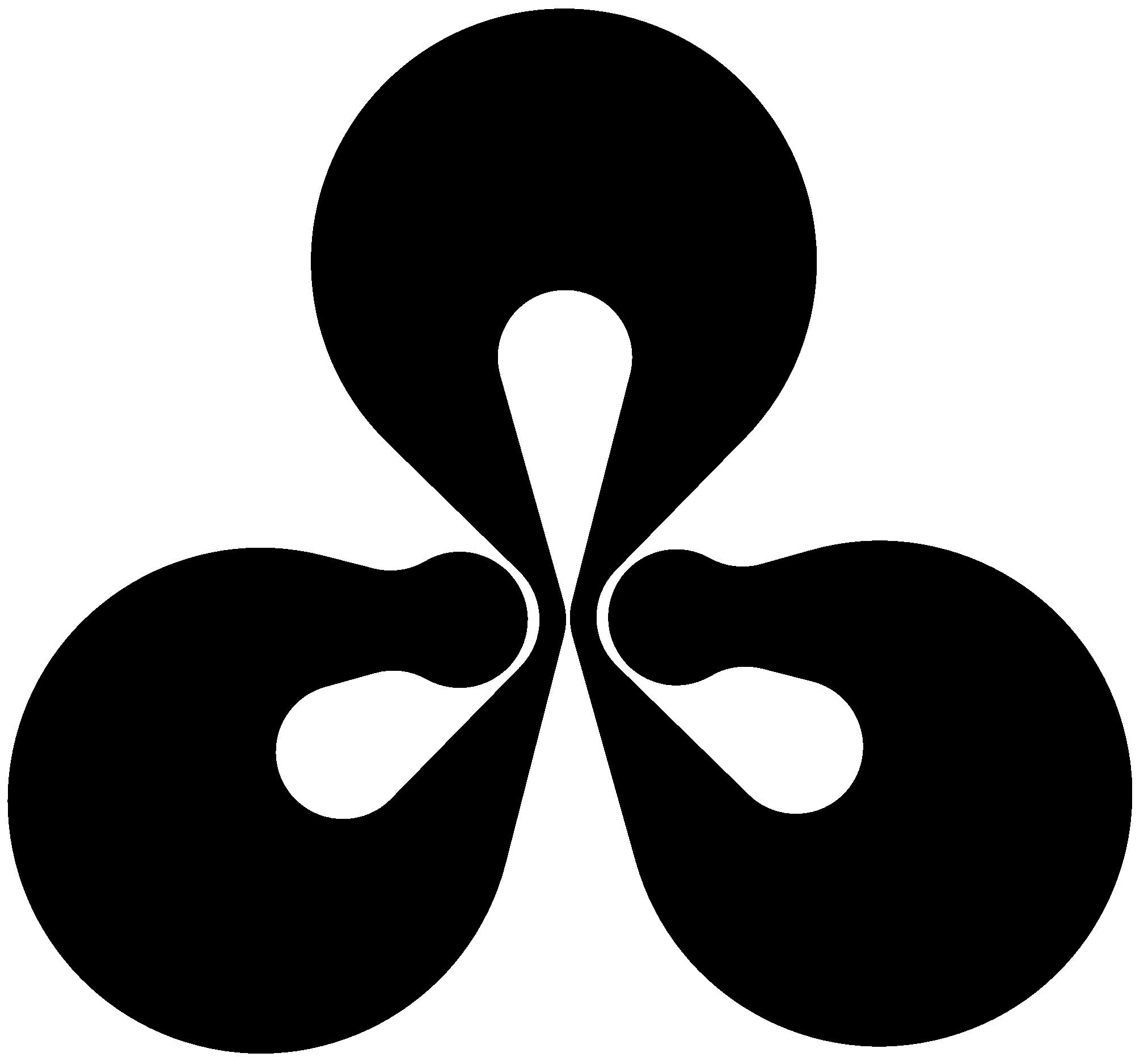}};
         \node[text width=5cm,align=center] at (13cm,0)  {Can be open or closed\\Bounded; curvature $<$ 1\\$\notin$ $\D$, $\notin$  $\D_{\le}$\\Is locally drawable};
     \end{tikzpicture}
     \caption{Some locally drawable but undrawable sets. Their non-drawability follows from the general obstruction given in Theorem~\ref{thm:totdisundraw}. The construction of the second set---``octopus''---is similar to the snake in Figure~\ref{fig:imposg}.}
     \label{fig:moreimposg}
\end{figure}

\section{Properties of drawable sets}\label{sec:convex}

In this section we collect some simple properties of drawable and closed-disk drawable sets, and prove Theorem~\ref{thm:closed-disk}. Recall that a set $A\subset \R^2$ is \emph{convex} if for any two $x,y \in A$ the line segment connecting $x$ and $y$ is entirely within~$A$. For two vectors $x, y \in \R^2$ we denote their inner product $x_1y_1+x_2y_2$ by~$\langle x,y \rangle$.

\begin{theorem}
\label{thm:closed-convex}
    Any closed convex set in $\R^2$ is drawable.
\end{theorem}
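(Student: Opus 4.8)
The plan is to show that any closed convex set $C \subset \R^2$ can be written as a single open $1$-neighborhood $N(A)$, so that $C \in \D_1 \subseteq \D$ and no eraser is ever needed. The natural candidate for $A$ is the \emph{inner parallel body} of $C$, namely the set of points of $C$ that are at distance at least $1$ from the complement, equivalently $A = \{a \in \R^2 : \overline{B(a,1)} \subseteq C\}$ (the centers of closed unit disks contained in~$C$). The claim to verify is then the set equality $N(A) = C$.

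First I would establish the inclusion $N(A) \subseteq C$, which is the easy direction: if $a \in A$ then the open unit disk $B(a,1)$ lies inside the closed disk $\overline{B(a,1)} \subseteq C$, so taking the union over all $a \in A$ keeps us inside~$C$. The reverse inclusion $C \subseteq N(A)$ is where the real content lies, and I expect it to be the main obstacle. Here I would argue pointwise: given $x \in C$, I must produce a center $a \in A$ with $|x - a| < 1$, i.e. a closed unit disk through (or near) $x$ that still fits entirely inside~$C$. For interior points this is clear once one has \emph{any} disk inside $C$, but the delicate case is a boundary point $x \in \partial C$, where one must push a unit disk inward so that it contains $x$ yet stays within~$C$.

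The key geometric tool for the hard direction is the supporting hyperplane property of closed convex sets: at any boundary point $x$ there is a supporting line, and convexity guarantees that $C$ lies entirely in the closed half-plane $H$ bounded by that line. The plan is to place the center $a = x - n$, where $n$ is the inward unit normal to a supporting line at~$x$; then the closed unit disk $\overline{B(a,1)}$ is tangent to the supporting line at $x$ and lies in the half-plane $H \supseteq C$. The one thing I must be careful about is that $\overline{B(a,1)} \subseteq C$, not merely $\overline{B(a,1)} \subseteq H$; convexity of $C$ combined with the fact that $C$ contains a full unit disk (so $C$ has nonempty interior and is ``wide enough'') should let me conclude the disk fits, perhaps by intersecting supporting half-planes or by a continuity/compactness argument. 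Since $x \in \overline{B(a,1)}$ with $|x - a| = 1$, the point $x$ lies on the boundary of the open disk; I would handle this boundary issue either by noting that $C$ is the closure of its interior (for a convex body) and approximating, or by using open supporting-disk arguments so that interior points of segments toward $a$ land strictly inside $N(A)$. I would also dispatch low-dimensional degenerate cases (when $C$ is a point, a segment, or has empty interior) separately, since these cannot contain a unit disk and must instead be realized as limits or handled by a direct construction; this bookkeeping, rather than any deep idea, is where most of the care will go.
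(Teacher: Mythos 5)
Your approach has a fatal flaw that cannot be patched: you are trying to realize a closed convex set $C$ as a single open $1$-neighborhood $N(A)$, i.e.\ as an element of $\D_1$. But every set of the form $N(A)$ is a union of open disks and hence \emph{open}, so if $N(A) \subseteq C$ then $N(A)$ lies in the interior of~$C$ and can never contain any boundary point of~$C$. Thus $N(A) = C$ is impossible for any closed convex set other than $\emptyset$ and $\R^2$ itself. The ``boundary issue'' you flag at the end (that $x \in \partial C$ sits at distance exactly $1$ from your chosen center $a = x - n$) is not bookkeeping to be handled by approximation or by taking closures --- it is the reason the whole strategy fails. Separately, your construction also breaks for non-degenerate but ``thin'' convex sets (a closed disk of radius $1/2$, a long thin rectangle): these have nonempty interior yet contain no unit disk, so your $A$ is empty and $N(A) = \emptyset$, while the set is certainly not degenerate in the sense of being a point or a segment. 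The conclusion is that the eraser is genuinely necessary here, and any one-step proof is doomed.

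The paper's proof works in two steps: first color the entire plane black ($N(\R^2) = \R^2$), then erase a union of open halfspaces. Each open halfspace $\{x : \langle x,y\rangle > a\}$ is itself a union of open unit disks (centered at points of the halfspace at distance at least one from its boundary line), so an arbitrary union of open halfspaces is a legitimate single application of the eraser. What remains after erasing is the complementary intersection of closed halfspaces, and every closed convex set arises this way. Your supporting-line intuition is pointing in the right direction --- supporting halfplanes are exactly the right objects --- but they must be used on the \emph{complement} with the eraser, not to pack disks into~$C$ with the pencil.
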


\begin{proof}
    Any open halfspace, that is, any set of the form $H = \{x \in \R^2 \: : \: \langle x,y \rangle > a\}$ for some $y \in \R^2$ of norm~$1$ and $a \in \R$, is a union of open unit disks. Namely, $H$ is the set~$N(A)$, where $A$ is the set of~$z+\lambda y$ with $\langle z,y \rangle = a$ and $\lambda \ge 1$, that is, $A$ is the set of points in $H$ at distance at least one to the line $\{x \in \R^2 \: : \: \langle x,y \rangle = a\}$. In a first step we can color the plane black. In a second step we can erase any union of open halfspaces. This means that any intersection of closed halfspaces is drawable. This is precisely the collection of closed convex sets.
\end{proof}

The condition that the convex set be closed in order to be drawable is indeed needed. In fact, most convex sets are not drawable.
We will show this now. 

A set $A \subset \R^2$ is a \emph{Borel set} if it is contained in the $\sigma$-algebra generated by open sets in~$\R^2$. Recall that a non-empty set system is called $\sigma$-algebra if it is closed under taking complements and under taking countable unions.

\begin{theorem}
\label{thm:Borel}
    Every drawable set is a Borel set.
\end{theorem}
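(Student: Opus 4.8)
The plan is to show that every drawable set lies in the Borel $\sigma$-algebra by tracing through the inductive construction of $\D$ and verifying that each operation used preserves membership in the Borel sets. Since the Borel $\sigma$-algebra is closed under complements and countable unions (and hence under finite unions, finite intersections, and set differences), it suffices to establish two facts: first, that each individual neighborhood $N(A)$ is a Borel set, and second, that the operations $D \mapsto D \cup N(A)$ and $D \mapsto D \setminus N(A)$ carry Borel sets to Borel sets. The second fact is immediate once the first is known, because $D \setminus N(A) = D \cap (N(A))^c$ and both union and complement stay within any $\sigma$-algebra. So the real content reduces to the first claim.

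The key step I expect to be the main obstacle is proving that $N(A)$ is Borel for an \emph{arbitrary} set $A \subset \R^2$, with no measurability assumption on $A$ itself. The crucial observation is that $N(A)$ is an \emph{open} set: if $x \in N(A)$, then $|x - a| < 1$ for some $a \in A$, and by the strict inequality every point in a small ball around $x$ is still within distance one of that same $a$, so that ball lies in $N(A)$. Indeed, $N(A) = \bigcup_{a \in A} B(a,1)$ is a union of open disks and therefore open regardless of the cardinality or complexity of $A$. Every open subset of $\R^2$ is a Borel set, so $N(A)$ is Borel. This is precisely where the choice of open unit disks in the definition of $\D$ (as opposed to $N_{\le}$) does the work: the strictness of the inequality guarantees openness, sidestepping any need to control the wildness of $A$.

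With these pieces in hand I would organize the proof as a short induction on the number of steps~$n$. For the base case, $\D_1$ consists of sets $N(A_1)$, which are open and hence Borel. For the inductive step, assume every set in $\D_{n-1}$ is Borel; a set in $\D_n$ has the form $D \cup N(A_n)$ or $D \setminus N(A_n)$ for some $D \in \D_{n-1}$, and since $N(A_n)$ is Borel and $D$ is Borel by hypothesis, closure of the Borel $\sigma$-algebra under unions, complements, and finite intersections shows the result is Borel. Since $\D = \bigcup_{n=1}^\infty \D_n$ and each $\D_n$ consists of Borel sets, every drawable set is Borel.

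I would remark that the analogous argument for $\D_{\le}$ is more delicate and does \emph{not} yield the Borel conclusion in general: $N_{\le}(A)$ need not be open (nor closed), and indeed Theorem~\ref{thm:closed-disk} asserts only that closed-disk drawable sets are Lebesgue measurable, a strictly weaker conclusion reflecting the fact that $N_{\le}(A)$ can fail to be Borel for pathological~$A$. This contrast is worth flagging, since it explains why the cardinalities of $\D$ and $\D_{\le}$ differ.
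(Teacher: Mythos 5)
Your proposal is correct and follows essentially the same route as the paper: observe that $N(A)$ is open (hence Borel) for arbitrary $A \subset \R^2$ because it is a union of open disks, then induct on the number of steps using closure of the Borel $\sigma$-algebra under unions and set differences. Your closing remark contrasting this with $N_{\le}(A)$ is a nice observation that the paper makes only implicitly, but the core argument is identical.
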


\begin{proof}
    Any set of the form $N(A)$ for $A\subset \R^2$ is open as a union of open disks, and thus every set in $\mathcal D_1$ is a Borel set. The claim that every element of $\D = \bigcup_{n = 1}^\infty \D_n$ is Borel as well now follows by a simple induction, since sets in $\mathcal D_n$ are obtained from sets in $\mathcal D_{n-1}$ either by taking complements with open sets or by taking the union with an open set.
\end{proof}

\begin{corollary}
\label{cor:cardinality-drawable}
    The cardinality of the collection of drawable sets $|\mathcal D|$ is strictly less than the cardinality of the collection of convex sets in the plane. In particular, most convex sets are not drawable.
\end{corollary}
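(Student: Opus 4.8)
The plan is to play two cardinalities against each other: bound $|\D|$ from above by the cardinality $\mathfrak{c} = 2^{\aleph_0}$ of the continuum, and bound the number of convex sets from below by $2^{\mathfrak{c}}$. Cantor's theorem $\mathfrak{c} < 2^{\mathfrak{c}}$ then yields the strict inequality, and the final sentence about ``most'' convex sets follows immediately from comparing the two cardinalities.

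For the upper bound, I would invoke Theorem~\ref{thm:Borel}: every drawable set is Borel, so $|\D|$ is at most the number of Borel subsets of~$\R^2$. The Borel $\sigma$-algebra on $\R^2$ is generated by a countable base (say open balls with rational centers and rational radii), and a standard transfinite induction up the Borel hierarchy—at each of the $\aleph_1$ levels one forms countable unions and complements—shows there are exactly $\mathfrak{c}$ Borel sets. Hence $|\D| \le \mathfrak{c}$. This is the one place where I lean on a classical fact rather than a self-contained argument, but it is routine.

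For the lower bound on the number of convex sets, the idea is to use a single strictly convex curve to manufacture $2^{\mathfrak{c}}$ distinct convex sets. Let $D^{\circ}$ be the open unit disk and let $S^1$ be its boundary circle. For each subset $T \subseteq S^1$, set $C_T = D^{\circ} \cup T$. The key claim is that each $C_T$ is convex, which I expect to be the conceptual heart of the argument, though the verification is short: for two points of $C_T$, if both lie in $D^{\circ}$ the whole segment does; if one lies in $D^{\circ}$ the open segment does; and if both lie on $S^1$, strict convexity of the circle forces the open chord to lie inside $D^{\circ}$, so again the segment stays in $C_T$. Since $C_T \cap S^1 = T$, the map $T \mapsto C_T$ is injective, so the number of convex sets is at least $|2^{S^1}| = 2^{\mathfrak{c}}$ (and at most $|2^{\R^2}| = 2^{\mathfrak{c}}$, hence equal).

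Combining the two bounds gives $|\D| \le \mathfrak{c} < 2^{\mathfrak{c}}$, which is exactly the asserted strict inequality between $|\D|$ and the cardinality of the collection of convex sets. Because at most $\mathfrak{c}$ of the $2^{\mathfrak{c}}$ convex sets can be drawable, the drawable convex sets form a subcollection of strictly smaller cardinality, so in this cardinality sense most convex sets are not drawable.
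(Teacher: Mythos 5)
Your proposal is correct and follows essentially the same route as the paper: the upper bound $|\D|\le 2^{\aleph_0}$ via Theorem~\ref{thm:Borel} and the standard count of Borel sets, and the lower bound $2^{2^{\aleph_0}}$ on convex sets via the family $D^{\circ}\cup T$ for $T\subseteq S^1$. Your added verification that each $C_T$ is convex and your sketch of the Borel-set count (which the paper delegates to a citation) are both sound.
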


\begin{proof}
    There are at most as many drawable sets as there are Borel sets by Theorem~\ref{thm:Borel}. The cardinality of the set of Borel sets is~$2^{\aleph_0}$, the cardinality of real numbers; see~{\cite[Thm.~3.3.18]{Srivastava_1998}}.
    However, the set of convex sets in the plane has the same size as the power set of the reals, which is strictly larger than~$2^{\aleph_0}$. To see this observe that any set that fits between the open unit disk centered at the origin and the closed unit disk centered at the origin is convex. That is, let $U$ be any subset of the unit circle~$S^1$. Then $\{x \in \R^2 \: : \: |x| < 1\} \cup U$ is convex. There are as many subsets of~$S^1$ as subsets of the reals.
\end{proof}

\begin{theorem}
    Any convex set in $\R^2$ is closed-disk drawable.
\end{theorem}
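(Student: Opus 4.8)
The plan is to mirror the proof of Theorem~\ref{thm:closed-convex}: color the entire plane black and then erase the complement of $C$ in a single step. Concretely, I would show that for any convex set $C \subset \R^2$ the complement $\R^2 \setminus C$ is a union of closed unit disks, i.e.\ $\R^2 \setminus C = N_{\le}(A)$ where $A$ is the set of all centers $m$ with $\overline{B(m,1)} \cap C = \emptyset$. Granting this, and using $\R^2 = N_{\le}(\R^2)$, the set $C = N_{\le}(\R^2) \setminus N_{\le}(A)$ lies in $\D_{\le}$; in fact it is obtained by coloring the plane black and erasing once, exactly as in the proof of Theorem~\ref{thm:closed-convex}. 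Since the inclusion $N_{\le}(A) \subseteq \R^2 \setminus C$ is automatic, everything reduces to the claim: for every $q \in \R^2 \setminus C$ there is a closed unit disk $D$ with $q \in D$ and $D \cap C = \emptyset$.

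I would prove this claim in two cases. If $q \notin \overline C$, let $p$ be the nearest point of the closed convex set $\overline C$ to $q$ and set $u = (q-p)/|q-p|$; the nearest-point projection yields a supporting line $\{x : \langle x - p, u\rangle = 0\}$ with $\overline C \subseteq \{\langle x-p,u\rangle \le 0\}$. Centering the disk at $m = p + (|q-p|+1)u$ forces every point of $\overline{B(m,1)}$ to satisfy $\langle x-p,u\rangle \ge |q-p| > 0$, so $\overline{B(m,1)}$ is disjoint from $\overline C \supseteq C$, while $|q-m| = 1$ gives $q \in \overline{B(m,1)}$.

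The interesting case is $q \in \overline C \setminus C$. Since $\operatorname{int} \overline C = \operatorname{int} C \subseteq C$ for convex sets, we have $q \in \partial \overline C$, so there is a supporting line $\ell$ at $q$ with outward unit normal $\nu$ and $\overline C \subseteq \{x : \langle x-q,\nu\rangle \le 0\}$. I would take the disk tangent to $\ell$ at $q$ on the outer side, $D = \overline{B(q+\nu,1)}$. Then every point of $D$ satisfies $\langle x-q,\nu\rangle \ge 0$, so $D \cap \overline C$ is contained in $\ell$; but a closed unit disk tangent to $\ell$ meets $\ell$ only at the tangency point $q$, whence $D \cap \overline C \subseteq \{q\}$. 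As $q \notin C$, this gives $D \cap C = \emptyset$ with $q \in D$. When $C$ is lower-dimensional, $\overline C$ lies in a line $\ell$, which is itself a supporting line at $q$, and a disk tangent to $\ell$ at $q$ from either side works identically; the degenerate cases $C = \emptyset$ and $C = \R^2$ are immediate.

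The step I expect to be the crux is this boundary case, and in particular the observation that a closed unit disk tangent to a supporting line meets $\overline C$ in the single point $q$, even when $q$ lies in the relative interior of an edge of $\overline C$. This is precisely where the closed-disk model is essential: the tangency point $q$ is covered by the closed disk $D$ but would be missed by the corresponding open disk, which is exactly what lets us erase the boundary points of $\overline C \setminus C$ without disturbing $C$ or $\operatorname{int} C$. This mechanism is unavailable in the open-disk model and is morally the source of the inequality $|\D_{\le}| > |\D|$.
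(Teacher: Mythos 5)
Your proof is correct and follows essentially the same route as the paper's: both hinge on the observation that at a point of $\overline C \setminus C$ a supporting line yields a tangent closed unit disk that meets $\overline C$ only at the tangency point, hence is disjoint from $C$. The only difference is organizational --- the paper first draws $\overline C$ as an intersection of closed halfplanes and then erases tangent disks covering $\overline C \setminus C$, whereas you erase the entire complement $\R^2 \setminus C$ as a single union of closed unit disks.
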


\begin{proof}
    We begin by showing that any closed convex set is closed-disk drawable. The proof is essentially the same as for Theorem~\ref{thm:closed-convex}. With the difference that now, given some $H = \{x \in \R^2 \: : \: \langle x,y \rangle > a\}$ for $y \in \R^2$ of norm~$1$ and $a \in \R$, we have to represent it as $N_{\ge}(A)$ for some~$A \subset \R^2$, that is, as a union of closed unit disks. The set $H$ is simply the union of closed unit disks centered at~$z+\lambda y$ with $\langle z,y \rangle = a$ and $\lambda > 1$, that is, $A$ is the set of points in $H$ at distance strictly greater than one from the line $\{x \in \R^2 \: : \: \langle x,y \rangle = a\}$.
    
    Now given some convex set $C \subset \R^2$, first realize its closure $\overline C$ as a closed-disk drawable set. We then have to delete certain boundary points of~$\overline C$, namely all points in $\overline C \setminus C$. The points in $\overline C \setminus C$ are contained in the union of closed unit disks that stay entirely within the complement of~$C$. Indeed, for any point $x_0 \in \overline C \setminus C$ consider a supporting line~$\ell$, that is, a line that is disjoint from the interior of~$C$ and contains~$x_0$. If $\ell$ is defined by the equation $\langle x,y \rangle = a$ for $y \in \R^2$ of norm one and $a\in \R$, then the closed unit disk centered at $x_0 + y$ contains $x_0$ and is entirely contained within the complement of~$C$.
\end{proof}

A set $A \subset \R^2$ that differs from a Borel set in a subset of a set of Lebesgue measure zero is called \emph{Lebesgue set}. The collection of Lebesgue sets form a $\sigma$-algebra, since countable unions of measure-zero sets have measure zero.

\begin{proof}[Proof of Theorem~\ref{thm:closed-disk}]
    Any (not necessarily countable) union of closed unit disks is a Lebesgue set~\cite[Thm.~1.1]{balcerzak1999uncountable}. Since Lebesgue sets form a $\sigma$-algebra, this implies that any closed-disk drawable set is a Lebesgue set. The cardinality of the set of Lebesgue sets is the same as the cardinality of the power set of~$\R$, which is equal to the cardinality of convex sets in $\R^2$ by the proof of Corollary~\ref{cor:cardinality-drawable}. All of these sets are closed-disk drawable, showing that there are as many closed-disk drawable sets as Lebesgue sets. Since each drawable set is a Borel set by Theorem~\ref{thm:Borel} and the set of Borel sets has the cardinality~$2^{\aleph_0}$ of the reals, we have that $|\D| < |\D_{\ge}|$. 
    
    It remains to exhibit an example of a Lebesgue subset of~$\R^2$ that is not closed-disk drawable. Observe that for any closed-disk drawable set $A \in \D_{\ge}$ there is a closed unit disk in~$A$ or a closed unit disk in the complement of~$A$. This is because every set is finalized in finitely many steps and the last step either drew a black unit disk in~$A$ or erased a white unit disk. A sufficiently fine checkerboard pattern is an example of a subset $A$ of $\R^2$ such that neither $A$ nor its complement contain a (closed) unit disk. (For a less trivial, bounded example of a Lebesgue set that is not closed-disk drawable---namely a $2 \times 2$ chessboard already suffices---see Theorem~\ref{thm:chessboard}, proven in the next section.)
\end{proof}

\section{Non-drawability of the $2 \times 2$ chessboard}

For a drawable set $A\in\D$ witnessed by sets $A_1, \dots, A_n$, and any point $x \in \R^2$ there is a last time where the color of~$x$ in the process of drawing $A$ changed. We call this the stationary number~$\stat(x)$ of~$x$. We give the precise definition here.

\begin{definition}
For $A\in \D$ witnessed by $A_1,\dots,A_n \subset \R^2$ and a point $x\in \R^2$ define
$$\stat(x):=\begin{cases}\min\{k\text{ odd}:x\in N(A_k)\tand\forall\even k'>k,x\notin N(A_{k'})\}&(x\in A)\\\min\{k\text{ even} :x\in N(A_k)\tand\forall\odd k'>k,x\notin N(A_{k'})\}&(x\notin A)\end{cases}$$
\end{definition}

\begin{definition}\label{defi:encircle}
A collection of points $S\subseteq \R^2$ is said to \textit{encircle} $T\subseteq \R^2$ if it is impossible for an open unit disk to touch any point in $T$ without touching any point in $S$; i.e., $$\forall x\in \R^2, (B(x,1)\cap S=\emptyset\rightarrow B(x,1)\cap T=\emptyset).$$
\end{definition}

We note that if $S_1$ encircles $T_1$ and $S_2$ encircles $T_2$, then $S_1\cup S_2$ encircles $T_1\cup T_2$.
If $S$ encircles $T$ and we would like to change the color of any point in~$T$, we must also change the color of some point in~$S$. So with this notion in hand, we can build local obstructions to a set being drawable. The following lemma is almost immediate.

\begin{lemma}\label{lem:maxsn}
If $S$ encircles $T$ and every pair $\{x,y\}\in S\times T$ has opposite colors, then $$\max_{x\in S}\stat(x)> \max_{y\in T}\stat(y).$$
\end{lemma}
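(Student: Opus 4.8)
The plan is to locate the point of $T$ whose stationary number is maximal and to show that the open disk that performed its final color change is forced, through the encircling hypothesis, to touch a point of $S$ of the opposite color; that point of $S$ must then change color still later, yielding the strict inequality. First I would record that the hypothesis ``every pair $\{x,y\}\in S\times T$ has opposite colors'' makes $S$ and $T$ monochromatic of opposite colors: any two points of $S$ are each opposite to a fixed point of $T$ and hence agree with one another, and symmetrically for $T$. So exactly one of the two cases holds: $T$ white and $S$ black, or $T$ black and $S$ white. I treat the first; the second is identical with the parities swapped.

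Let $y^\ast\in T$ realize $m:=\max_{y\in T}\stat(y)$, assuming (as holds in our finite applications) that the maximum is attained. Since $y^\ast$ is white, the even branch of the definition of $\stat$ applies, so $m$ is even and $y^\ast\in N(A_m)$. Thus there is a center $a\in A_m$ with $|y^\ast-a|<1$, i.e. the open unit disk $B(a,1)$ meets $T$ at $y^\ast$. Because $S$ encircles $T$, the contrapositive of Definition~\ref{defi:encircle} forces $B(a,1)\cap S\neq\emptyset$, so there is a point $x\in S$ with $|x-a|<1$; since $a\in A_m$ this says precisely $x\in N(A_m)$.

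It remains to read off the consequence for stationary numbers directly from the definition. The point $x$ is black, so $\stat(x)$ is computed by the odd branch, and it is well defined because the last index touching $x$ is odd (its final color is black). Now I claim $\stat(x)>m$: any odd index $k$ qualifying in the formula for $\stat(x)$ must satisfy $k>m$, for otherwise $m$ would be an even index exceeding $k$ with $x\in N(A_m)$, contradicting the requirement that no even $k'>k$ has $x\in N(A_{k'})$. Hence $\stat(x)>m$, and therefore $\max_{x\in S}\stat(x)\ge \stat(x)>m=\max_{y\in T}\stat(y)$, as desired.

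I expect the only genuine care to be the bookkeeping at the last step: one must obtain the \emph{strict} inequality $\stat(x)>m$ rather than $\stat(x)\ge m$, and this is exactly where the opposite-color hypothesis is essential, since it guarantees that the step $m$ which erased (or drew over) $y^\ast$ is an index of the \emph{wrong} parity for $x$ and so cannot itself finalize $x$. The encircling hypothesis does all the geometric work of producing such an $x\in S$ inside the very disk $B(a,1)$, so no metric estimate is needed beyond unwinding $|y^\ast-a|<1$ and $|x-a|<1$. The symmetric case requires only exchanging the roles of even and odd throughout.
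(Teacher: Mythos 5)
Your proof is correct and takes essentially the same route as the paper's: pick the point of $T$ realizing the maximum $m$, use the encircling hypothesis to find a point $x\in S$ inside the very unit disk from step $m$ that finalized that point of $T$, and conclude $\stat(x)>m$ from the opposite-color (parity) mismatch. The paper phrases the last step informally (``at step $m$ the colors of $x$ and $y$ agree, but they differ at the end''), whereas you unwind the definition of $\stat$ via parities; the content is identical.
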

\begin{proof}
Let $k=\max_{y\in T}\stat(y).$ Some unit disk centered in $A_k$ must touch some point $y$ in~$T$, that is, there is a $y\in T\cap N(A_k)$, so it must also cover some point $x$ in~$S$, that is, there is an $x\in S\cap N(A_k)$. At this step the color of $x$ and the color of $y$ are the same, but $x$ and $y$ have opposite colors at the end. Thus $x$ is not finalized at the $k$-th step, so $\stat(x)>k$.
\end{proof}

We need one more elementary geometric fact before we can prove Theorem~\ref{thm:chessboard}, that a $2 \times 2$ chessboard is not drawable. 

\begin{lemma}\label{lem:trape}
If an isosceles trapezoid has base lengths $a,b$ ($a<b$), leg length $c$ and height $h$ (that means $c^2=(b-a)^2/4+h^2$), then its circumradius is $$R = \frac{c\sqrt{ab+c^2}}{2h}.$$
\end{lemma}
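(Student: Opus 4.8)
The plan is to exploit the fact that an isosceles trapezoid is a cyclic quadrilateral, so its four vertices lie on a single circle of radius $R$; consequently $R$ is the circumradius of any triangle formed by three of those vertices. I would pick the triangle $T$ whose vertices are the two endpoints of the longer base together with one endpoint of the shorter base, and then apply the standard formula $R = \frac{pqr}{4K}$ relating the circumradius of a triangle to the product $pqr$ of its side lengths and its area $K$.

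Concretely, I would place the trapezoid with the longer base on the $x$-axis, its endpoints at $(\pm b/2, 0)$, and the shorter base at height $h$ with endpoints at $(\pm a/2, h)$. Then $T$ has vertices $(-b/2,0)$, $(b/2,0)$, and $(a/2,h)$. Its three sides have lengths $b$ (the long base), $c$ (a leg, since its squared length is $(b-a)^2/4 + h^2 = c^2$), and a diagonal whose squared length is $(a+b)^2/4 + h^2$. The area of $T$ is immediate: it has base $b$ and corresponding height $h$, so $K = \tfrac12 b h$.

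The one genuinely load-bearing computation---and the only place the hypothesis $c^2 = (b-a)^2/4 + h^2$ enters---is the simplification of the diagonal. Here I would use the algebraic identity
$$\frac{(a+b)^2}{4} + h^2 = ab + \frac{(b-a)^2}{4} + h^2 = ab + c^2,$$
so the diagonal has length $\sqrt{ab + c^2}$. Plugging the three side lengths $b$, $c$, $\sqrt{ab+c^2}$ and the area $\tfrac12 b h$ into $R = \frac{pqr}{4K}$ gives
$$R = \frac{b \cdot c \cdot \sqrt{ab + c^2}}{4 \cdot \tfrac12 b h} = \frac{c\sqrt{ab+c^2}}{2h},$$
as claimed, with the factors of $b$ cancelling cleanly.

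I do not expect any serious obstacle: once the cyclic property is invoked, the problem collapses to the one-line identity above together with the triangle circumradius formula. The only alternative worth noting is a direct coordinate computation---placing the circumcenter at $(0,k)$ on the axis of symmetry, solving the equidistance condition $b^2/4 + k^2 = a^2/4 + (h-k)^2$ for $k$, and simplifying $R^2 = b^2/4 + k^2$---but this route forces one to expand a degree-four expression in $a,b,h$ and is strictly more tedious than the triangle approach, so I would present the triangle computation as the main argument.
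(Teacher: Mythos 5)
Your proof is correct, and it takes a genuinely different route from the paper's. You observe that an isosceles trapezoid is cyclic, discard one vertex, and apply the triangle circumradius formula $R = pqr/(4K)$ to the triangle spanned by the long base and one short-base endpoint; the only computation is that the diagonal has length $\sqrt{(a+b)^2/4 + h^2} = \sqrt{ab+c^2}$, after which everything cancels. The paper instead locates the circumcenter $O$ explicitly: it computes the distance $|OM|$ from $O$ to the midpoint $M$ of a leg via a similar-triangles/projection argument (getting $|OM| = (a+b)c/(4h)$) and then applies the Pythagorean theorem in the right triangle $OMB$. Both arguments ultimately rest on the same identity $(a+b)^2/4 = ab + (b-a)^2/4$, but yours hides the circumcenter entirely and leans on the standard formula $R = pqr/(4K)$, which makes it shorter and less fiddly; the paper's version is more self-contained Euclidean geometry and exhibits where the center actually sits, at the cost of a slightly more delicate projection step. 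Either is a complete and valid proof of the lemma.
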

\begin{proof}
Say trapezoid $ABCD$ is as described, with $|AB|=a$. Denote the circumcenter as $O$ and the midpoint of $BC$ as~$M$. Find $H$ on $CD$ such that $BH\perp CD$. Consider the line parallel to $AB$ through~$O$, and let $OL$ be the projection of $OM$ to this line. Note that $|OL|=(a+b)/4$ and $|OL|/|OM|=|BH|/|BC|=h/c$ and hence $|OM|=(a+b)c/4h$. As $O$ is the circumcenter, $OM\perp BC$; thus, \eqn{R&\ =\ \sqrt{|OM|^2+|BM|^2}\nonumber\\
&\ =\ \sqrt{\left(\frac{(a+b)c}{4h}\right)^2+\left(\frac{c}{2}\right)^2}\nonumber\\
&\ =\ \frac{c}{2h}\sqrt{\frac{(b+a)^2}{4}+h^2}\nonumber\\
&\ =\ \frac{c}{2h}\sqrt{ab+c^2}.\qedhere}
\end{proof}

\begin{proof}[Proof of Theorem~\ref{thm:chessboard}]
Suppose the set $[-1,0] \times [-1,0] \cup [0,1] \times [0,1]$ was drawable. We will construct an infinite sequence of points with strictly decreasing stationary number. This will be a contradiction.

Consider $B_{1a}:=(r\cos\theta,r\sin\theta)$, for some small $r<1$ and $\theta<\pi/4$. Let $B_{1b}=\begin{bsmallmatrix}0&1\\1&0\end{bsmallmatrix}B_{1a}$ (the reflection of $B_{1a}$ about $y=x$), $B_{1c}=-B_{1a},B_{1d}=-B_{1b}.$ Obtain the analogous points $W_{1a},W_{1b},W_{1c},W_{1d}$ by using $-\theta$ in place of $\theta$. Let $B_{2x}=\ohalf B_{1x},W_{2x}=\ohalf W_{1x}$ ($x\in\{a,b,c,d\}$). As shown in Figure \ref{fig:2x2pt}, the $B$ points are black and $W$ points are white.

\begin{figure}
\centering
    \includegraphics[scale=0.5]{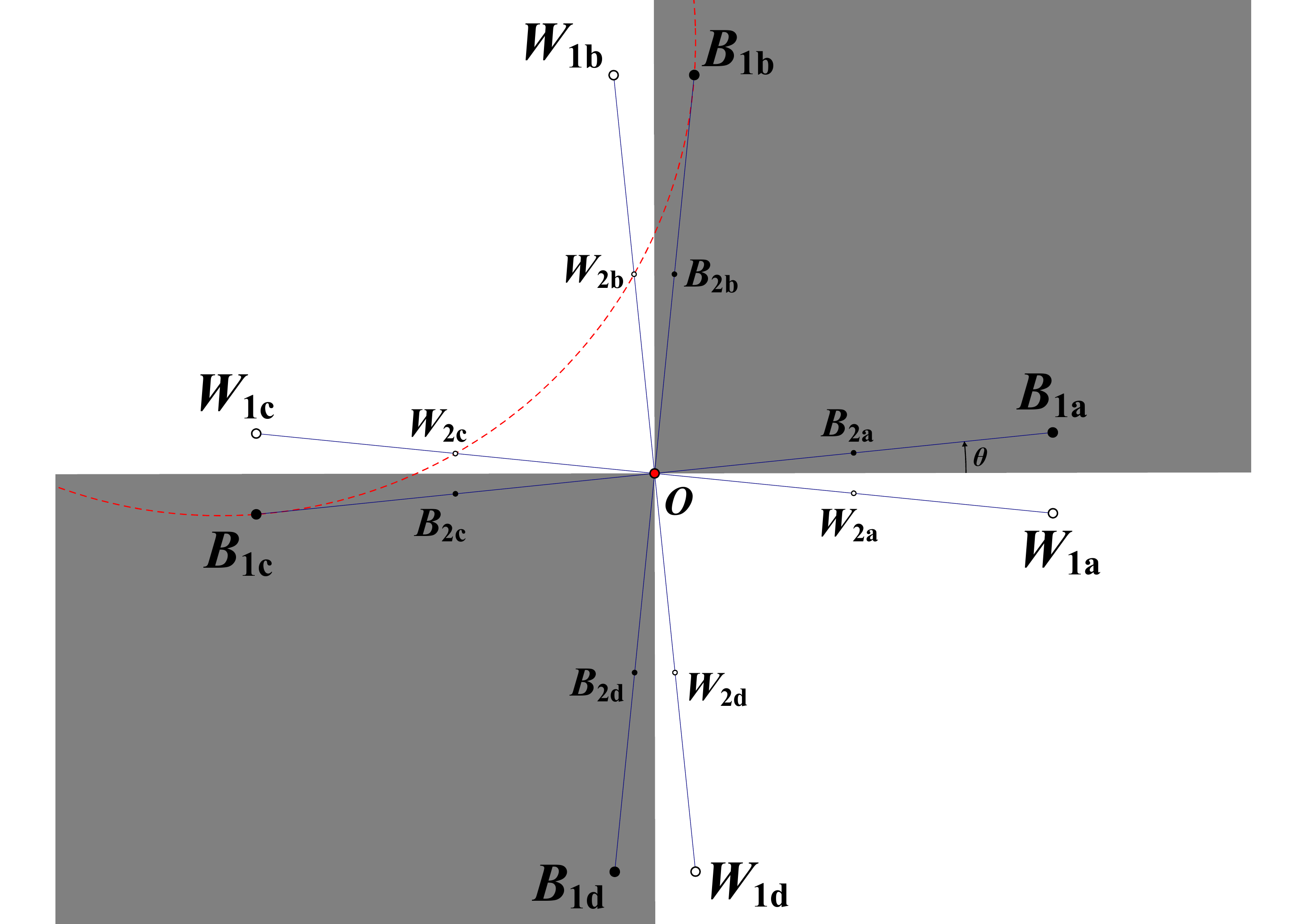}
    \caption{Points in the $2 \times 2$ chessboard.}
    \label{fig:2x2pt}
\end{figure}

We note that with sufficiently small $r$ and $\theta$, $\{B_{1a},B_{1b},B_{1c},B_{1d}\}$ encircles $\{W_{2a},W_{2b},W_{2c},W_{2d}\}$. The red dashed circle in Figure \ref{fig:2x2pt} demonstrates the largest disk that could touch the inner white points without touching the outer black ones. By Lemma \ref{lem:trape}, when $\theta\to 0$, its radius goes to $\sqrt{10}r/4$, which could be arbitrarily small with small $r$. (We just need it to be $<1$.) By symmetry, $\{W_{1a},W_{1b},W_{1c},W_{1d}\}$ encircles $\{B_{2a},B_{2b},B_{2c},B_{2d}\}$. Hence by Lemma \ref{lem:maxsn}, if $[-1,0]^2\cup[0,1]^2$ were drawable, then
\eqn{&\max(\stat(B_{1a}),\stat(B_{1b}),\stat(B_{1c}),\stat(B_{1d}), \stat(W_{1a}),\stat(W_{1b}),\stat(W_{1c}),\stat(W_{1d})) \\>\ &\max(\stat(B_{2a}),\stat(B_{2b}),\stat(B_{2c}),\stat(B_{2d}), \stat(W_{2a}),\stat(W_{2b}),\stat(W_{2c}),\stat(W_{2d})).}

Repeat this argument for $B_{3x} =\frac12 B_{2x}$ and  $W_{3x} = \frac12 W_{2x}$. The black points $B_{2x}$ of the second stage again encircle the white points $W_{3x}$ of the third stage, and likewise the $W_{2x}$ encircle the~$B_{3x}$. Thus the maximal stationary number of the points $B_{3x}$ and $W_{3x}$ is strictly smaller than that of the $B_{2x}$ and~$W_{2x}$. Repeat this process to get an infinite descending sequence of stationary numbers. Such an infinite descending chain of positive integers is not possible. This is a contradiction.
\end{proof}

\section{Undrawable sets with small curvature and global obstructions to drawability}

Here we show that if a region is bounded by a curve of small curvature, then it is locally drawable. We construct the {\snake}, a region whose boundary has small curvature, but that is not drawable. The obstruction to drawability we exhibit can be phrased in general terms, and this obstruction is ``global'' instead of ``local;'' see Theorem~\ref{thm:totdisundraw}.

First we recall some basic notions of the differential geometry of planar curves. We refer to do Carmo's book~\cite{doCarmo} for details. Let $\gamma$ be a simple smooth closed curve in the plane, parametrized by arc length, that is, $|\gamma'(s)|=1$ for all~$s$. Let $x_0 = \gamma(s_0)$ be a point on the trace of~$\gamma$. The curve $\gamma$ has a well-defined tangent line at~$x_0$. 
Rotate that tangent line by $90\degree$ in positive (i.e., counter-clockwise) direction to obtain the unit normal $n(s)$ of~$\gamma(s)$. 
Then since $\gamma'(s)$ is a unit vector, its derivative $\gamma''(s)$ is orthogonal to the tangent $\gamma'(s)$ for every~$s$. Thus $\gamma''(s) = k(s)n(s)$ for some function $k(s)$, called the (signed) curvature of~$\gamma$. The (unsigned) curvature is $\kappa(s) = |k(s)|$.

The following lemma may be seen as a special case of Blaschke's classical rolling ball theorem~{\cite[p.~114]{blaschke1916}}, which states that if two smooth regular (positively oriented) convex curves $\gamma_1$ and $\gamma_2$ touch in one point~$x$, where they have the same tangent vector, and the curvature of $\gamma_1$ is larger or equal to the curvature of~$\gamma_2$, then $\gamma_1$ is contained entirely within the region bounded by~$\gamma_2$. Moreover, if the curvature of $\gamma_1$ is strictly less than the curvature of~$\gamma_2$, then outside of the point~$x$, the curve $\gamma_1$ is contained in the interior of the region bounded by~$\gamma_2$.

We make no assumption on the convexity of curves, but locally every smooth regular curve is convex. Blaschke's theorem shows that we may choose $\varepsilon = 1$ in the lemma below. Since we do not need a sharp estimate on~$\varepsilon$, the lemma follows easily by Taylor expansion. We include the simple argument for the reader's convenience.

\begin{lemma}
\label{lem:rolling_disk}
    Let $\gamma\colon I \to \R^2$ be a smooth curve parametrized by arc length, defined on some compact interval~$I$, and let $s_0 \in I$. Assume $\kappa(s) < 1$ for all $s \in I$. Then there are two circles $C_1$ and $C_2$ of radius one with centers $\gamma(s_0) \pm n(s_0)$, which touch $\gamma$ at $\gamma(s_0)$ but do not contain $\gamma(s)$ for $s \in (s_0-\varepsilon, s_0+\varepsilon)$ for some $\varepsilon > 0$. Moreover, this $\varepsilon$ can be chosen independent of~$s_0$.
\end{lemma}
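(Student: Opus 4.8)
The plan is to reduce the statement to a local computation via Taylor expansion around $s_0$, taking advantage of the freedom granted by the non-sharp $\varepsilon$. First I would set up coordinates: translate so that $\gamma(s_0)$ is the origin, and rotate so that the tangent $\gamma'(s_0)$ points in the positive $x$-direction, whence the normal $n(s_0)$ points in the positive $y$-direction. In these coordinates the two candidate circles $C_1, C_2$ have centers at $(0, 1)$ and $(0, -1)$. By symmetry (replacing $\gamma$ by its reflection, or equivalently considering the two signs together) it suffices to prove the claim for one of them, say $C_1$ with center $(0,1)$. I want to show that for $s$ near $s_0$, the point $\gamma(s)$ lies outside or on the boundary of the disk bounded by $C_1$; that is, $|\gamma(s) - (0,1)|^2 \ge 1$, with equality only at $s = s_0$.

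The key computation is the Taylor expansion of $f(s) := |\gamma(s) - (0,1)|^2 - 1$ about $s = s_0$. Writing $\gamma(s) = (x(s), y(s))$ with $x(s_0) = y(s_0) = 0$, $\gamma'(s_0) = (1,0)$, and $\gamma''(s_0) = k(s_0)\, n(s_0) = (0, k(s_0))$ (since in these coordinates $n(s_0) = (0,1)$), I compute $f(s_0) = 0$ and $f'(s_0) = 2\langle \gamma(s_0) - (0,1), \gamma'(s_0)\rangle = 0$. The crucial second derivative is
\[
f''(s_0) = 2|\gamma'(s_0)|^2 + 2\langle \gamma(s_0) - (0,1), \gamma''(s_0)\rangle = 2 + 2\langle (0,-1), (0, k(s_0))\rangle = 2 - 2k(s_0).
\]
Since $\kappa(s_0) = |k(s_0)| < 1$, we have $k(s_0) < 1$ and hence $f''(s_0) = 2(1 - k(s_0)) > 0$. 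Thus $s_0$ is a strict local minimum of $f$ with value zero, so $f(s) > 0$ for $s$ near $s_0$, $s \ne s_0$, which is exactly the assertion that $\gamma(s)$ is not contained in the closed disk bounded by $C_1$. The analogous computation with center $(0,-1)$ gives $f''(s_0) = 2(1 + k(s_0)) > 0$ using $k(s_0) > -1$, handling $C_2$.

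The main obstacle, and the only substantive point beyond the pointwise estimate, is the uniformity of $\varepsilon$ in $s_0$. A naive local minimum argument produces an $\varepsilon$ depending on $s_0$ through higher-order Taylor data, but I need a single $\varepsilon$ that works for every $s_0 \in I$. To handle this I would make the Taylor estimate quantitative: expanding with an explicit remainder, one finds $f(s) \ge (1 - k(s_0))(s - s_0)^2 - C|s - s_0|^3$ where $C$ is a uniform bound on the third-order terms. Here compactness of $I$ is essential. Since $\gamma$ is smooth on the compact interval $I$, the derivatives $\gamma', \gamma'', \gamma'''$ are bounded uniformly, so $C$ can be chosen independent of $s_0$; and since $\kappa$ is continuous on the compact set $I$ with $\kappa < 1$ everywhere, it attains a maximum $\kappa_{\max} < 1$, giving $1 - k(s_0) \ge 1 - \kappa_{\max} > 0$ uniformly. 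Then $f(s) \ge |s-s_0|^2\big((1 - \kappa_{\max}) - C|s - s_0|\big) > 0$ whenever $0 < |s - s_0| < \varepsilon := (1 - \kappa_{\max})/C$, which is the desired uniform bound. The same uniform constant controls both circles simultaneously.
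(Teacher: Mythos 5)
Your proof is correct, and while it shares the paper's basic strategy---a second-order Taylor estimate made uniform by compactness of $I$---the key computation is genuinely different. The paper first invokes the inverse function theorem to write $\gamma$ locally as a graph $(s,f(s))$, uses the graph formula $k = f''/(1+f'^2)^{3/2}$, and then compares the Taylor expansion of $f$ against the explicit circle arcs $\sqrt{1-s^2}$ and $2-\sqrt{1-s^2}$, which requires some bookkeeping with square roots. You instead Taylor-expand the squared distance $|\gamma(s)-p|^2$ to the candidate center $p=\gamma(s_0)\pm n(s_0)$ and read off $f''(s_0)=2(1\mp k(s_0))>0$ directly from $\langle \gamma(s_0)-p,\gamma''(s_0)\rangle = \mp k(s_0)$ and $\langle\gamma',\gamma''\rangle=0$. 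This sidesteps the graph representation, the curvature formula, and the square-root manipulations entirely, and it makes the role of the hypothesis $|k|<1$ transparent. The one point you state a little briskly is the uniformity of the cubic remainder constant $C$: the function whose third derivative you are bounding depends on $s_0$ through the center $p$, via $f'''(\xi)=2\langle\gamma(\xi)-p,\gamma'''(\xi)\rangle$; but since $\gamma(I)$ is bounded and $|n|=1$, the factor $|\gamma(\xi)-p|$ is uniformly bounded, so your claim holds. Both proofs get the uniform $\varepsilon$ from the same two compactness facts ($\kappa_{\max}<1$ and bounded higher derivatives); yours packages them into the explicit threshold $\varepsilon=(1-\kappa_{\max})/C$.
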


\begin{proof}   
    To simplify the notation we translate $I$ so that $s_0 = 0$. By applying an appropriate rigid motion we may assume that $\gamma(0) = (0,1)$ and $\gamma'(0) = \pm(1,0)$. By perhaps reversing orientation we may additionally assume that $\gamma'(0) =(1,0)$ and thus $n(0) = (0,1)$. There is a $\delta > 0$ such that the trace of $\gamma$ restricted to $s \in (-\delta, \delta)$ is the graph of a smooth function, say, $(s, f(s))$ is on the trace of $\gamma$ for $s \in (-\delta, \delta)$. We note that by the inverse function theorem $\gamma|_{[-\delta,\delta]}$ is the graph of a smooth function as long as the derivative of the first coordinate $\gamma_x'$ is non-zero everywhere. Since $\gamma_x'(0) = 1$ and $|\gamma_x''(s)| \le |\gamma''(s)| = \kappa(s) < 1$, we may choose $\delta > 0$ independently of~$s_0$. We chose the coordinate system in such a way that $f(0) = 1$ and $f'(0) = 0$. 
    
    The signed curvature of $\gamma$ at $(s,f(s))$ is 
    $$k(s) = \frac{f''(s)}{(1+f'(s)^2)^{3/2}}.$$
    Thus $f''(s) = k(s)\cdot (1+f'(s)^2)^{3/2}$, which is approximately $k(s)$ for small~$s$. By Taylor expanding $f$ we see that $f(s) = f(0) + f'(0)s +\frac12f''(\xi)s^2 = 1+\frac12f''(\xi)s^2$ for some $\xi$ between $0$ and~$s$.

    The relevant part of the circle of radius one with center $\gamma(0) - n(0) = (0,0)$ is the trace of the curve $C_1(s) = (s, \sqrt{1-s^2})$. Similarly, for the circle of radius one with center $\gamma(0) + n(0) = (0,2)$ we consider the curve $C_2(s) = (s, 2-\sqrt{1-s^2})$. We need to show that for small~$s$ we have $C_1(s) \le f(s) \le C_2(s)$. Equivalently, for small $s$ we need to show that
    $$\sqrt{1-s^2} \le 1+ \frac{f''(\xi)s^2}{2} \le 2-\sqrt{1-s^2}.$$
    This holds with equality for $s=0$, so we may assume $s\ne0$ from now on. These two inequalities can equivalently be expressed as
    $$\sqrt{1-s^2} \le 1 \pm \frac{f''(\xi)s^2}{2}.$$
    Squaring this and collecting all terms on the right we have to show that
    $0 \le (1\pm f''(\xi))s^2 +\frac14f''(\xi)^2s^4$. This is equivalent 
    to $0 \le 1\pm f''(\xi) +\frac14f''(\xi)^2s^2$, which is evidently true for $s$ close to~$0$ since $|k(s)| = \kappa(s) < 1$ and $f''(s) = k(s)\cdot (1+f'(s)^2)^{3/2}\approx k(s)$. Moreover, $0 \le 1\pm f''(\xi) +\frac14f''(\xi)^2s^2$ is a strict inequality for small but non-zero~$s$. We note that since the maximum unsigned curvature in the curve is less than 1 (by compactness), the threshold can be chosen independent of~$s_0$.
\end{proof}

In some sense, our notion of drawability may be seen as a sequential version of Blaschke's rolling ball theorem. We can now show that regions bounded by curves of small curvature are locally drawable.

\begin{theorem}
\label{thm:curvature_bound}
    Let $\gamma\colon I \to \R^2$ be a simple, smooth, closed curve, that is, $\gamma$ is a smooth embedding of a circle into the plane. Assume $|k(s)| < 1$ for all $s\in I$. Then the closed region bounded by $\gamma$ is locally drawable and locally closed-disk drawable.
\end{theorem}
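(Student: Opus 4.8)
The plan is to verify local drawability point by point, splitting into three cases according to whether $x$ lies in the interior of $R$, in the exterior, or on the boundary curve $\gamma$. The first two cases are immediate. If $x$ is interior, choose a neighborhood $U$ contained in the interior, so that $U\cap R=U$, and take $D=\R^2\in\D$ (color the plane black). If $x$ is exterior, choose $U$ inside the open complement of $R$, so that $U\cap R=\emptyset$, and take $D=\emptyset=N(\emptyset)$. Since $\R^2$ and $\emptyset$ are both drawable and closed-disk drawable, only the boundary case requires genuine work.

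For a boundary point $x=\gamma(s_0)$ I would pass to the local coordinates of Lemma~\ref{lem:rolling_disk}, in which $\gamma$ near $x$ is a graph $(s,f(s))$; after orienting $\gamma$ as the positively oriented boundary of $R$, the normal $n(s)$ points into $R$, so $R$ is locally the region on the $n$-side of the graph. The key objects are the tangent unit disks from the lemma: for each parameter $s$ the disk $C_1(s)$ centered at $\gamma(s)-n(s)$ lies on the exterior side and $C_2(s)$ centered at $\gamma(s)+n(s)$ lies on the interior side, each touching $\gamma$ only at $\gamma(s)$ for $s$ in a fixed range, with the half-width $\varepsilon$ uniform in $s_0$. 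I will realize $R$ in the open model using the exterior disks and an eraser, and in the closed-disk model using the interior disks and a pencil.

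For the open model, set $V=\bigcup_{|s-s_0|<\eta}B(\gamma(s)-n(s),1)=N(A_2)$, a union of open unit disks, and let $D=\R^2\setminus V\in\D$ (color black, then erase $V$), a two-step drawable set. Choosing $U=B(x,\rho)$ with $\rho$ and $\eta$ small, I claim $U\cap D=U\cap R$. The inclusion $R\cap U\subseteq D\cap U$ follows once the exterior disks are shown to remain on the exterior side throughout $U$, so that $V$ meets neither the interior nor the boundary of $R$ there; Lemma~\ref{lem:rolling_disk} gives exactly this for $\rho,\eta$ small. The reverse inclusion amounts to erasing every exterior point $q\in U$: projecting $q$ to its nearest boundary point $p=\gamma(s)$ in the tubular neighborhood of $\gamma$, the distance from $q$ to the center $\gamma(s)-n(s)$ equals $1-\operatorname{dist}(q,\gamma)<1$, so $q\in B(\gamma(s)-n(s),1)\subseteq V$, provided $|s-s_0|<\eta$, which holds for $U$ small by continuity of the projection. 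The closed-disk model is dual: I would instead draw $D=N_{\le}\!\big(\{\gamma(s)+n(s):|s-s_0|<\eta\}\big)\in\D_{\le}$ in a single pencil step with closed disks. The interior tangent disks lie in $R$, and their closedness now captures the boundary points $\gamma(s)$ themselves (each lies at distance exactly $1$ from its center), so the identical nearest-point computation, using the interior normal, yields $D\cap U=R\cap U$.

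The main obstacle is the boundary case, and within it the two uniformity claims: that each exterior (respectively interior) tangent disk stays strictly on the correct side of $\gamma$ throughout the small ball $U$, and that every nearby exterior (respectively interior) point is captured by some tangent disk whose parameter lies within $\eta$ of $s_0$. Both are consequences of Lemma~\ref{lem:rolling_disk} together with standard properties of the tubular neighborhood and the continuity of nearest-point projection for a smooth compact curve; crucially, the uniform $\varepsilon$ of the lemma is what permits $\rho$ and $\eta$ to be chosen independently of $s_0$, which is needed since $x$ ranges over all of $\gamma$. Everything else reduces to the elementary identity $\operatorname{dist}(q,\gamma(s)\pm n(s))=1-\operatorname{dist}(q,\gamma)$, which I would not belabor.
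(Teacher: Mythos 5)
Your proposal is correct and follows essentially the same route as the paper: interior and exterior points are handled trivially, and at a boundary point $\gamma(s_0)$ one uses the uniform $\varepsilon$ from Lemma~\ref{lem:rolling_disk} to either draw the closed unit disks centered at $\gamma(s)+n(s)$ (closed-disk model) or erase the open unit disks centered at $\gamma(s)-n(s)$ after blackening the plane (open model). Your write-up merely supplies more detail (the nearest-point projection identity) for why these disks exactly capture $R$ near $x$, which the paper leaves implicit.
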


\begin{proof}   
    Denote the closed region bounded by $\gamma$ by~$R$. Suppose $\gamma$ is positively oriented, so that $\gamma(s) + \lambda n(s)$ is in $R$ for all $\lambda \in [0, \delta)$ for some sufficiently small~$\delta > 0$ and every~$s$. Around any point $x$ in the interior of $R$ the set $R$ is easily seen to be locally drawable; after all, there is a small open set containing $x$ that is entirely contained in~$R$. By the same reasoning $R$ is locally drawable around any $x \notin R$. 
    
    For $x$ on the boundary of~$R$, say $x = \gamma(s_0)$, choose $\varepsilon > 0$ according to Lemma~\ref{lem:rolling_disk} (and independent of~$s_0$). By perhaps decreasing $\varepsilon$ such that $\varepsilon < \delta$, the $\varepsilon$-disk around $x$ intersects $\gamma$ only in points of the form $\gamma(s)$ for $s\in (s_0 - \varepsilon, s_0 + \varepsilon)$. Now the unit disks centered at $\gamma(s)+n(s)$ for $s\in (s_0 - \varepsilon, s_0 + \varepsilon)$ witness the local closed-disk drawability of $R$ around~$x$ by Lemma~\ref{lem:rolling_disk}. To see the local (open-disk) drawability, we erase the unit disks centered at $\gamma(s)-n(s)$ for $s\in (s_0 - \varepsilon, s_0 + \varepsilon)$.
\end{proof}

We now construct the ``snake'' in Figure~\ref{fig:imposg}, that is enclosed by a Jordan curve of curvature~$<1$, but is undrawable. We will choose $\kappa_0=1/1.001<1$ as the maximum curvature in the boundary, that is, the smallest osculating circle will have radius $r:=1.001$. We start by constructing a kite~$ABDC$, symmetric about the line segment~$AD$, such that $\angle ABC=15\degree,\angle CBD=30\degree$ (thus $\angle ABD =45\degree$) and the line segment $BD$ has length~$2r$. Next construct three circles with radius~$r$, centered at $B$, $D$ and $C$, respectively. The circle centered at $B$ is tangent to the circle centered at~$D$ in point~$M$. Similarly, the circle centered at $C$ is tangent to the circle centered at $D$ in point~$N$. Denote the intersection of $AB$ and the circle centered at $B$ by~$E$, and denote the intersection of $AC$ and the circle centered at $C$ by~$F$. Let $a_1$ be the shorter arc from $E$ to $M$ along the circle centered at~$B$, $a_2$ the longer arc from $M$ to $N$ along the circle centered at~$D$, and $a_3$ the shorter arc from $N$ to~$F$ along the circle centered at~$C$.

Construct the point $O$ such that $OE\perp AB, OF\perp AC$. Then $\angle EOF=30\degree.$ Extend $OE$ and $OF$ as rays $\ell_1,\ell_2$, and construct $\ell_3,\dots,\ell_{12}$ (all starting at~$O$) so that they together divide the space evenly into twelve parts in clockwise order.
Let $a_4$ be the minor $r$-arc (i.e., the circular arc with radius~$r$) that is tangent to $\ell_{12}$ and $\ell_3$, and let $a_5$ be the minor $r$-arc tangent to $\ell_{11}$ and $\ell_4$. Let $a_6,a_7$ be the major $r$-arcs tangent to $\ell_{12}\tand \ell_1$, and $\ell_3\tand \ell_4$, respectively. 

Let $E'$ be the tangent point of $a_6$ closer to~$A$. Let $F',S\tand T$ be on $\ell_2,\ell_{11}\tand \ell_5$ such that $|OE'|=|OF'|=|OS|=|OT|$. Let $a_8$ be the major arc that is tangent to $\ell_2$ at $F'$ and tangent to~$\ell_5$.

\begin{figure}[H]
    \centering
    \includegraphics[scale=0.45]{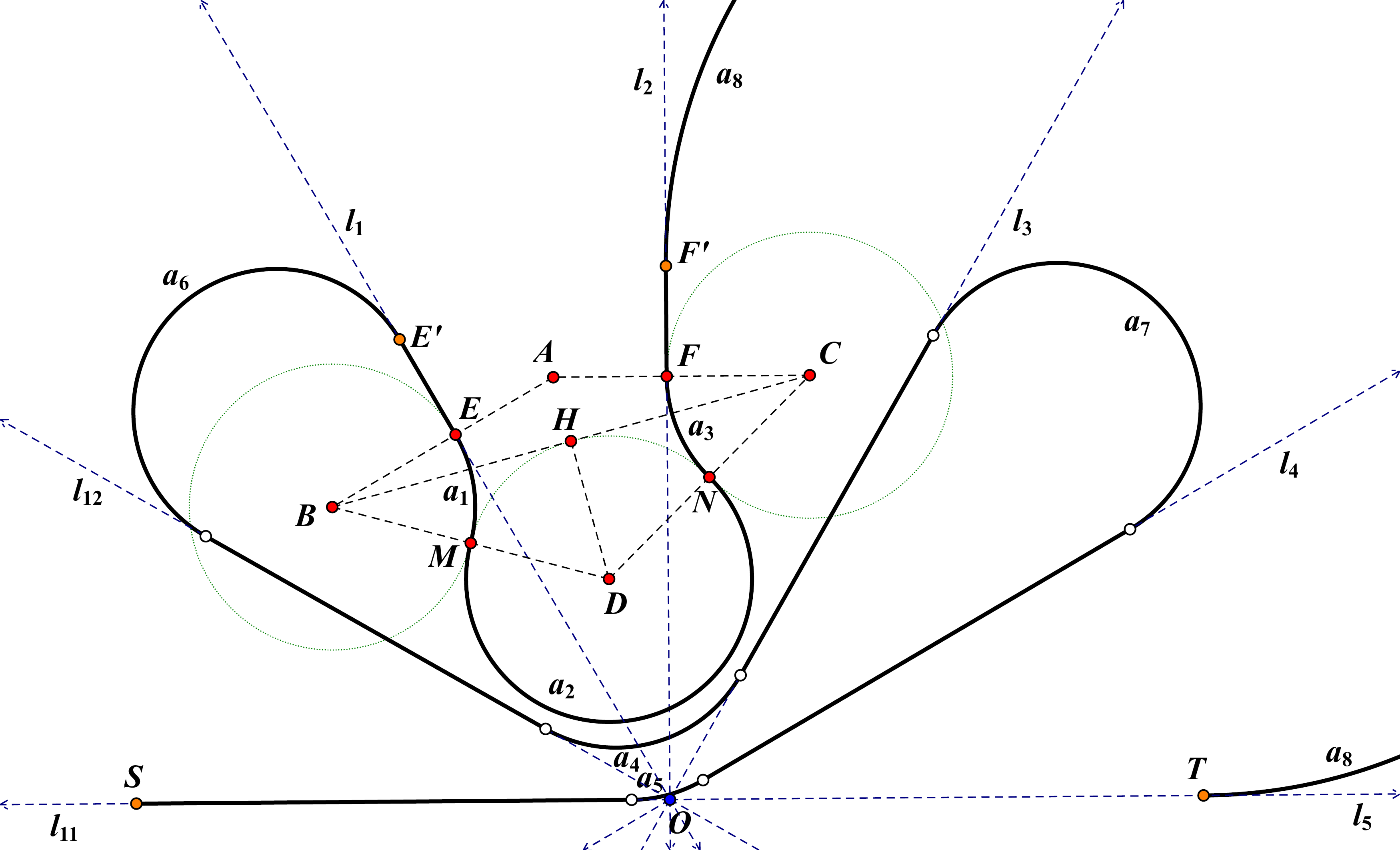}
    \caption{Construction of {\snake}.}
    \label{fig:snakeconstruc}
\end{figure}

As in Figure~\ref{fig:snakeconstruc}, a curved path connects $S, a_5, a_7, a_4, a_6, a_1, a_2, a_3, a_8 \tand T$ by line segments. 
Rotate this path by $180\degree$ around O, and they together form a simple, closed curve. The {\snake} is defined as the region enclosed by this curve. 

\begin{remark}\label{rek:c2prob}
The boundary of {\snake} is smooth everywhere except at the junctions between arcs and line segments (or arcs and arcs). At points where the curve is smooth, its curvature is always at most~$\kappa_0$. From the construction of {\snake} it is easily seen that it satisfies Lemma~\ref{lem:rolling_disk}, that is, for every point $x$ on the curve there are two unit disks to either side of the curve that intersect it in~$x$, and are otherwise disjoint from the curve in a neighborhood of~$x$. Thus {\snake} is locally drawable (and locally closed-disk drawable) with the same proof as in Theorem~\ref{thm:curvature_bound}.

If we wanted a smooth version of {\snake} with the same curvature bounds, we have to fuse the individual segments more carefully, continuously changing the curvature from zero along straight line segments to $\kappa_0$ along circle segments. This is precisely the ``track transition problem'' (or ``spiral easement'') encountered by railroad and highway engineers. See Dubins~\cite{dubins1957} for details. 
\end{remark}

Next, we show that {\snake} is neither drawable nor closed-disk drawable.

\begin{definition}
Let $A\subset \R^2$ be a set, $\ell \subset \R^2$ a ray emanating from the point~$P$, and $P_1$ and $P_2$ two points on~$\ell$ at distance $a$ and $b$ from~$P$, respectively. Suppose that $P_1$ is closer to $P$ than $P_2$, that is, $a < b$. Let $d > 0$. Consider the two rectangles (to either side of~$\ell$) with base $P_1P_2$, where the other side length is~$d$. If the interior of one of these rectangles is contained in~$A$, while the interior of the other rectangle does not intersect~$A$, we say that $A$ is \emph{dissected by $\ell$ at the interval $(a,b)$ with thickness~$d$}. If the rectangle contained in $A$ is in clockwise direction from~$P_1$, we say that the orientation of the dissection is \emph{clockwise}, and otherwise it is \emph{counterclockwise}.
A clockwise dissection is illustrated in Figure~\ref{fig:dissect}. 
\end{definition}
\begin{figure}
    \centering
    \includegraphics[scale=0.45]{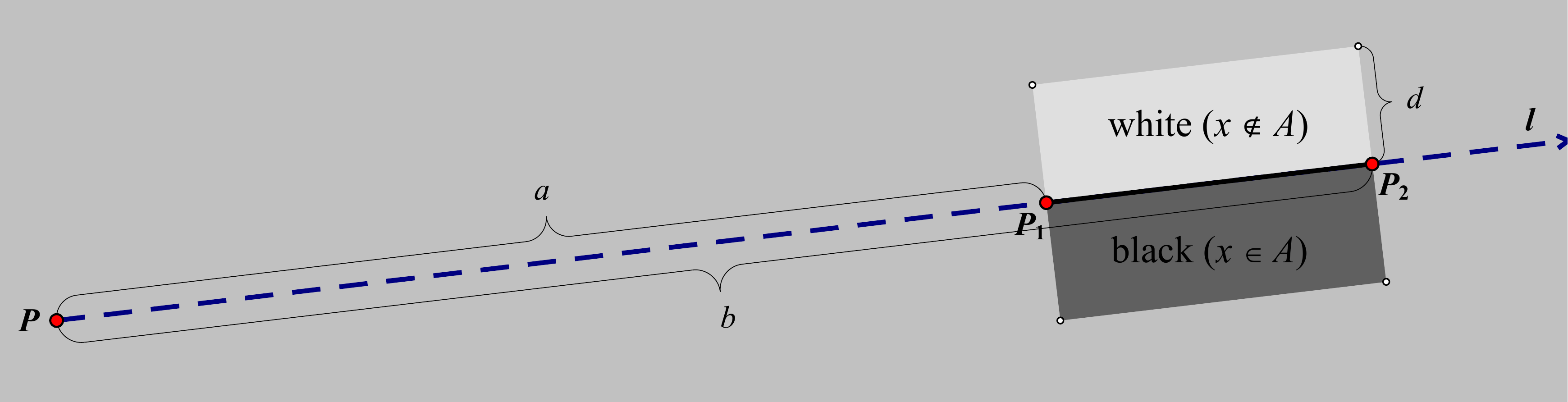}
    \caption{Here, $A$ is dissected by $\ell$ at $(a,b)$ with thickness $d$ and clockwise orientation.}
    \label{fig:dissect}
\end{figure}

\begin{definition}
For a positive even integer~$n$, a set $A\subset \R^2$ is \emph{totally $n$-dissected at interval $(a,b)$ with thickness $d$} if there are rays $\ell_1,\ell_2,\dots,\ell_n$ emanating from the same point in the given order that divide the plane evenly (i.e., into equal angles), such that $A$ is dissected by $\ell_i$ at $(a,b)$ with thickness~$d$ for every~$i$, and the adjacent rays have opposite orientations. An example is illustrated in Figure~\ref{fig:totaldissect}.
\end{definition}
\begin{figure}
    \centering
    \includegraphics[scale=0.3]{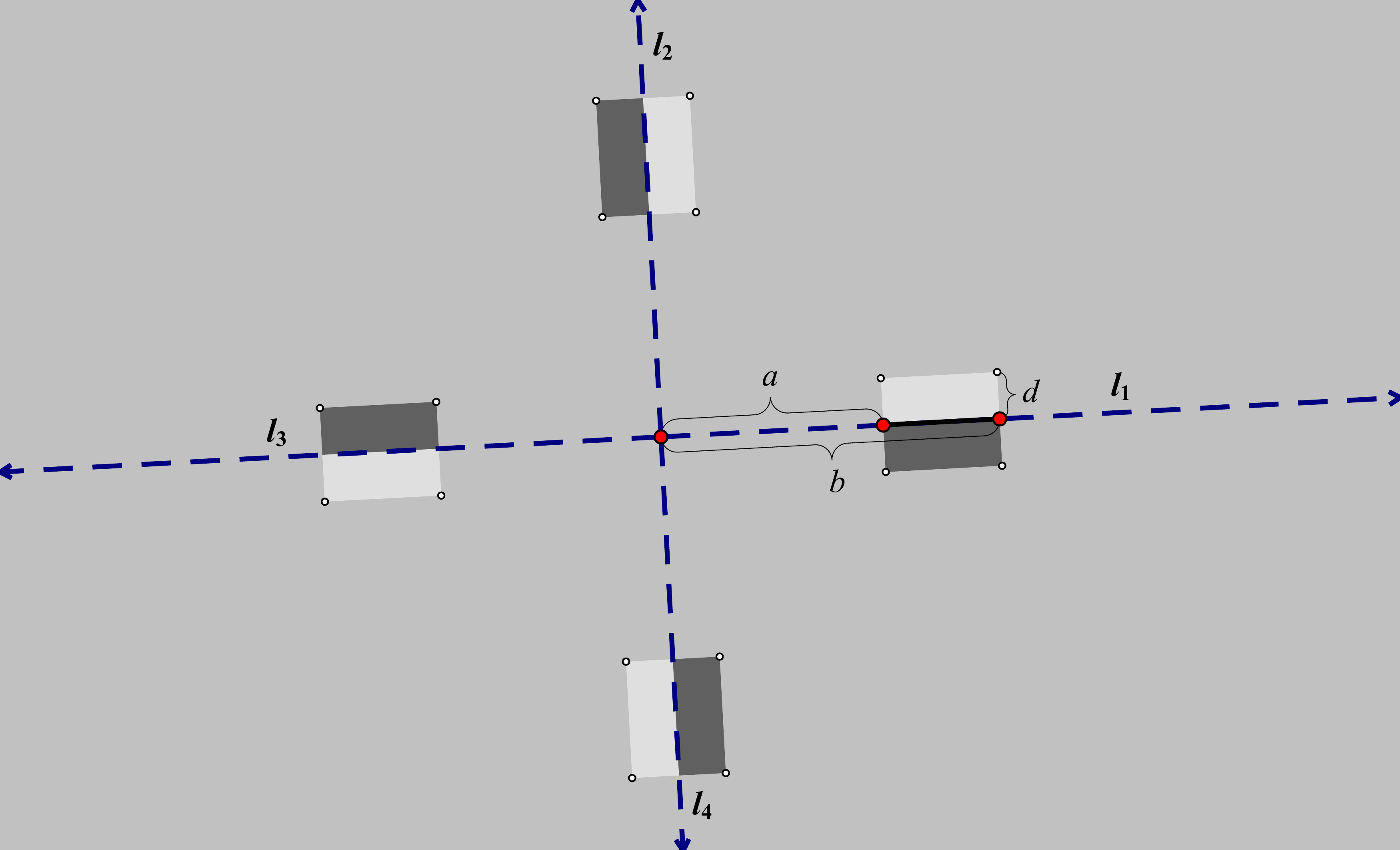}
    \caption{Here, $A$ is totally 4-dissected at $(a,b)$ with thickness $d$.}
    \label{fig:totaldissect}
\end{figure}

\begin{theorem}
\label{thm:totdisundraw}
If $A$ is totally $n$-dissected at $(a,b)$ with thickness~$d$, then if $a<\cot(\pi/n)$, the set $A$ is not drawable.
\end{theorem}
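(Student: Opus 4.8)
The plan is to argue by contradiction in the same spirit as the proof of Theorem~\ref{thm:chessboard}: assuming $A$ is drawable, I will produce a chain of points of definite colors whose stationary numbers are forced to strictly decrease, but in a way that must eventually return to its starting value. In the chessboard case the contradiction came from an \emph{infinite descent}, obtained by shrinking the configuration toward the apex; this works only because the chessboard pattern is self-similar all the way to the center. Here the $n$-dissected pattern is controlled only in the radial band at distances in $(a,b)$, so a purely radial descent would leave the region where the colors of $A$ are known after finitely many steps. The new idea is to let the obstruction ``close up'' rotationally: the totally $n$-dissected pattern is invariant under the rotation $\rho$ about the apex by the angle $2\pi/n$ \emph{together with a swap of the two colors} (this uses that $n$ is even and that adjacent rays have opposite orientations, so that the black and white sectors alternate). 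Thus $\rho$ carries each black rectangle onto the white rectangle one sector over, and $\rho^n$ is the identity.

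Granting the key geometric input below, the argument is then short. I place a finite, $\rho$-symmetric family $S$ of points inside the black rectangles near the inner radius, so that $\rho S$ is the corresponding family inside the white rectangles; every point of $S$ is black, every point of $\rho S$ is white, and all pairs in $S\times\rho S$ have opposite colors. The crux is the single encircling statement that $S$ encircles $\rho S$. Since encircling is preserved by the isometry $\rho$, applying $\rho^k$ shows that $\rho^k S$ encircles $\rho^{k+1}S$ for every $k$, and consecutive families have opposite colors; so Lemma~\ref{lem:maxsn} gives $\max_{x\in\rho^k S}\stat(x)>\max_{x\in\rho^{k+1}S}\stat(x)$ for every $k$. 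Chaining these $n$ strict inequalities around the full turn and using $\rho^{n}S=S$ yields $\max_{x\in S}\stat(x)>\max_{x\in S}\stat(x)$, the desired contradiction. This rotational cycle is exactly the ``global, closing-up'' phenomenon, in contrast to the local radial descent of Theorem~\ref{thm:chessboard}.

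It remains to prove the encircling $S$ encircles $\rho S$, and this is where the hypothesis $a<\cot(\pi/n)$ enters and where I expect the real work to lie. The geometric heart is the elementary fact that a unit disk internally tangent to a ray $\ell_i$ at distance $t$ from the apex, lying on the side of the adjacent white sector, reaches across to meet the neighboring ray $\ell_{i+1}$ precisely when $t<\cot(\pi/n)$; indeed, placing $\ell_i$ along a coordinate axis one computes that the signed distance from the disk's center to $\ell_{i+1}$ drops below $1$ in absolute value exactly for $t<\cot(\pi/n)$, with equality giving the disk inscribed in the wedge, whose points of tangency sit at distance $\cot(\pi/n)$. Consequently, when $a<\cot(\pi/n)$ a unit disk that meets the inner white points near $\ell_i$ cannot remain inside the white sector: it is forced across a bounding ray and into the black rectangle on the far side, which is the content of the encircling. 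Making this rigorous means bounding the radius of the largest disk that can meet the chosen inner white points while avoiding the neighboring black points and showing it is $<1$ exactly at the threshold $a=\cot(\pi/n)$; this is the direct analogue of the estimate ``$\sqrt{10}\,r/4<1$'' in Theorem~\ref{thm:chessboard}, and I would carry it out by a circumradius computation of the same flavor as Lemma~\ref{lem:trape}, with the isosceles trapezoid there replaced by the quadrilateral formed by inner points on two adjacent rays. The main obstacle is precisely this extremal-disk estimate: one must choose the points of $S$ so that the worst disk is pinned against \emph{two} adjacent rays (rather than hiding inside a single sector), so that the critical radius is governed by the angle $2\pi/n$ and the threshold comes out to be exactly $\cot(\pi/n)$.
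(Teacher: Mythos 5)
Your reduction of the whole theorem to the single claim ``$S$ encircles $\rho S$'' is where the proposal breaks, and not merely because the extremal-disk estimate is delicate: that claim is false for \emph{every} finite set $S$ contained in the black rectangles. Let $v\in S$ attain the maximum distance $R=\max_{x\in S}|x-O|$ from the apex $O$. Since $\rho$ is a rotation about $O$, the point $w=\rho v\in\rho S$ also lies at distance $R$ from $O$, while $S$ and hence its convex hull lie in the closed disk $\overline{B}(O,R)$; so the tangent line $\ell$ to the circle of radius $R$ at $w$ has all of $S$ in the closed half-plane containing $O$. Take the open unit disk tangent to $\ell$ at $w$ from the opposite side and translate it toward $O$ by a small $\varepsilon>0$: it now contains $w$, its part beyond $\ell$ contains no point of $S$, and its part on the $O$-side of $\ell$ is a circular segment of height $\varepsilon$ contained in a ball of radius $O(\sqrt{\varepsilon})$ around $w$. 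Since $w\notin S$ (it lies in a white rectangle) and $S$ is finite, that segment misses $S$ once $\varepsilon$ is small enough. Thus some unit disk touches $\rho S$ without touching $S$, so the encircling fails; and no cleverer placement of finitely many black points helps, because the obstruction regenerates at whichever point of the new $S$ is outermost. Consequently the finite rotational cycle $\max_{S}\stat>\max_{\rho S}\stat>\cdots>\max_{S}\stat$ cannot be assembled, and this is the load-bearing step of your plan.

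The paper keeps your rotational color-swapping symmetry but abandons the finite cycle in favor of an infinite descent, as in Theorem~\ref{thm:chessboard}---except that the descent converges not to the apex but to fixed ``pivot'' points $O_j$ placed on the rays at a common distance $L$ with $a<L<b$ and $L<\cot(\pi/n)$, so it never leaves the controlled annulus (this answers the worry in your first paragraph). For each generation $i$ it chooses four points per ray, two white and two black, at scales $s,t$ with $s^2\ll t\ll s$, and shows via the circumradius formula of Lemma~\ref{lem:trape} that the four monochromatic points flanking a pair of adjacent rays encircle an entire small ball around the two pivots; the critical radius tends to $L\tan(\pi/n)<1$, which is exactly where $a<\cot(\pi/n)$ enters and matches your tangency heuristic. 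The next generation is then placed inside those balls, giving sets $S_0,S_1,S_2,\dots$ with $S_i$ encircling $S_{i+1}$ groupwise with opposite colors, whence $M_0>M_1>M_2>\cdots$ by Lemma~\ref{lem:maxsn}---an impossible descending chain of positive integers. The ``closing up'' of the configuration is used only to make the alternating color pattern consistent around all $n$ rays (which needs $n$ even), not to close a cycle of inequalities. So your third paragraph identifies the right local geometry, but the global architecture must be the shrinking descent, not the rotational loop.
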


The bound in Theorem~\ref{thm:totdisundraw} is sharp: In a totally $n$-dissecting construction, a unit circle tangent to $\ell_1,\ell_2$ (i.e., a brush trying to squeeze in) will have its tangent points exactly $\cot(\pi/n)$ from $O$. Slide it along $\ell_1,\ell_2$ and replicate this process in each black zone, and we will draw a set that is totally $n$-dissected at $(\cot(\pi/n),\infty)$ with thickness two. This is illustrated in Figure~\ref{fig:illsharp}.

We first explain the proof idea of Theorem~\ref{thm:totdisundraw}. If $A$ is totally $n$-dissected at $(a,b)$ then there is a special point $O$ in~$A$, such that rotating $A$ by $\frac{4\pi}{n}$ around~$O$ leaves the set of dissected rectangles invariant, while rotating by $\frac{2\pi}{n}$ flips the colors in the set of dissected rectangles. We remark that this set is at distance $a$ from~$O$, and in particular, for large $n$, can be chosen arbitrarily far away from~$O$.

The condition that $a < \cot(\pi/n)$ now guarantees that cyclically consecutive dissected rectangles are sufficiently close that they must be drawn in order. Going around all $n$ rays, we then derive a contradiction once we arrive back at the first ray. Checking the relevant details is somewhat tedious but easy. We do this in the following.

\begin{figure}
    \centering
    \includegraphics[scale=0.3]{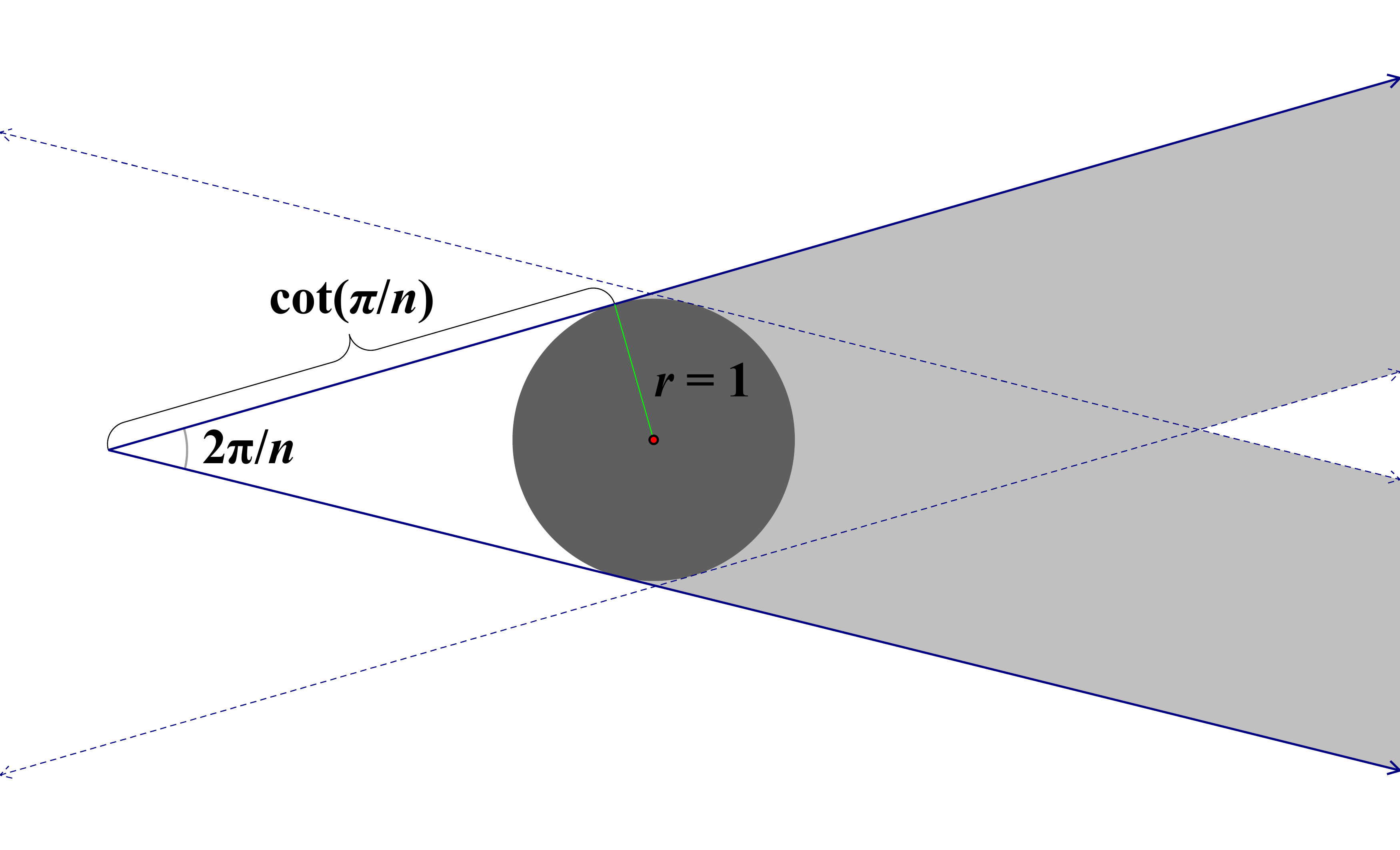}
    \caption{When $a\ge\cot(\pi/n)$, the set could be drawable.}
    \label{fig:illsharp}
\end{figure}

\begin{proof}[Proof of Theorem~\ref{thm:totdisundraw}]

\begin{figure}
    \centering
    \includegraphics[scale=0.45]{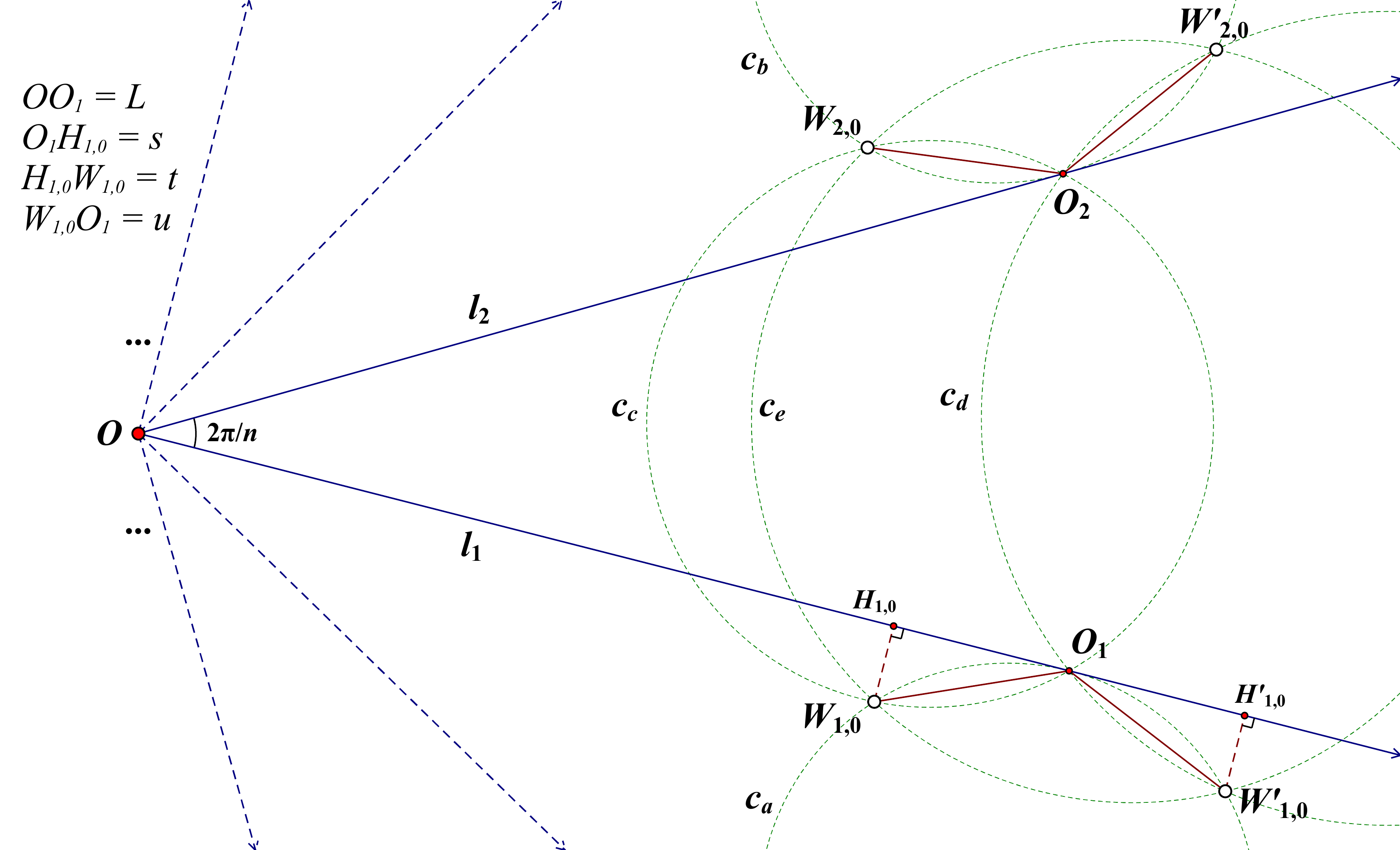}
    \caption{Some crucial points.}
    \label{fig:crucialpts}
\end{figure}

By definition, $A$ is dissected by $n$ rays emanating from the same point~$O$ that divide the plane evenly. Among those rays, find one which dissects $A$ with counterclockwise orientation, and denote it~$\ell_1$. Label the remaining rays $\ell_2,\dots,\ell_n$ in counterclockwise order.

We claim that there exists, for every $i\in\N$, white points $W_{1,i},W'_{1,i},W_{2,i},W'_{2,i} \in \R^2 \setminus A$ and black points $B_{1,i},B'_{1,i},B_{2,i},B'_{2,i} \in A$ such that for every $i\in\N$, \eqn{\{W_{1,i},W'_{1,i},W_{2,i},W'_{2,i}\}\text{ encircles }\{B_{1,i+1},B'_{1,i+1},B_{2,i+1},B'_{2,i+1}\}.}

Let $O_1$ and $O_2$ be on $\ell_1$ and~$\ell_2$, respectively, such that $|OO_1|=|OO_2|=L$, leaving $L$ to be specified later. Let $W_{1,0},W'_{1,0}$ be the two points that are (again, $s$ and $t$ to be specified later)
\begin{itemize}
    \item at distance $t$ from~$\ell_1$,
    \item closer to $\ell_n$ than~$\ell_2$, and 
    \item whose perpendiculars on $\ell_1$ are at distance $s$ from~$O_1$.
\end{itemize}  
Let $W_{1,0}$ be the point closer to~$O$. Let $W_{2,0},W'_{2,0}$ be the reflection points of $W_{1,0},W'_{1,0}$ about the bisector of $\ell_1$ and~$\ell_2$.

Our goal is to find $L,s,t$ such that points sufficiently close to $O_1$ or $O_2$ are encircled by $\mathcal W = \{W_{1,0},W'_{1,0},W_{2,0},W'_{2,0}\}$. Note that this is the case if the circles \eqn{
c_a&\ :=\ \text{the circle passing through }W_{1,0},O_1\tand W'_{1,0}\\
c_b&\ :=\ \text{the circle passing through }W_{2,0},O_2\tand W'_{2,0}\\
c_c&\ :=\ \text{the circle passing through }W_{1,0},O_1,O_2\tand W_{2,0}\\
c_d&\ :=\ \text{the circle passing through }W'_{1,0},O_1,O_2\tand W'_{2,0}\\
c_e&\ :=\ \text{the circle passing through }W_{1,0},W'_{1,0},W_{2,0}\tand W'_{2,0}
} all have radii smaller than~$1$. This is because any largest open disk that contains one of the $O_i$ but does not intersect $\mathcal W$ (which exists because $O_1,O_2$ are in the interior of the convex hull of $\mathcal W$) contains at least two points of $\mathcal W$ in its boundary---otherwise we may increase the radius of the disk. The five circles are the boundaries of the largest closed disks that contain at least two points of $\mathcal W$ in its boundary and at least one of the~$O_i$. Those circles are illustrated in Figure~\ref{fig:crucialpts}.

Let $u=|O_1W_{1,0}|=\sqrt{s^2+t^2}$. The radii of all five circles can be calculated by Lemma~\ref{lem:trape} ($c_a$ and~$c_b$, in particular, can be viewed as the circumcircle of an isosceles trapezoid with one base length~$0$):

{\small\eqn{
R(c_a)\ =\ R(c_b)&\ =\ \frac{u\sqrt{0\cdot 2s + u^2}}{2t}\ =\ \frac{u^2}{2t};\\
R(c_c)\ <\ R(c_d)&\ =\ \frac{u\sqrt{\left(L\sin\left(\povn\right)\right)\left(L\sin\left(\povn\right)+u\sin\left(\povn+\arctan\left(\frac{t}{s}\right)\right)\right)+u^2}}{u\cos\left(\povn+\arctan\left(\frac{t}{s}\right)\right)};\\
R(c_e)&\ =\ \frac{2s\sqrt{\left(L\sin\left(\povn\right)+u\sin\left(\povn-\arctan\left(\frac{t}{s}\right)\right)\right)\left(L\sin\left(\povn\right)+u\sin\left(\povn+\arctan\left(\frac{t}{s}\right)\right)\right)+(2s)^2}}{2s\cos\left(\povn\right)}.
}}

Now let $s^2\ll t\ll s\ll 1$. (For example, set $t=s^{1.5}$ and let $s\to 0^+$.) Then,

\eqn{
R(c_a)\ =\ R(c_b)&\ =\ \frac{s^2+t^2}{2t}\to 0;\\
R(c_c)\ <\ R(c_d)&\ \to\ \frac{\sqrt{\left(L\sin\left(\povn\right)\right)\left(L\sin\left(\povn\right)\right)}}{\cos\left(\povn+\arctan\left(\frac{t}{s}\right)\right)}\ \to\ L\cdot\tan\left(\povn\right);\\
R(c_e)&\ \to\ \frac{L\sin\left(\povn\right)}{\cos\left(\povn\right)}\ =\ L\cdot\tan\left(\povn\right).
}

Given that $a<\cot\left(\povn\right)$, there exists $a<L<b$ such that $L<\cot(\pi/n)$; then with sufficiently small $s,t$, all radii will be less than 1. That is, for small $s$ and $t$ there is an $r_0 > 0$ such that \eqn{ \{W_{1,0},W'_{1,0},W_{2,0},W'_{2,0}\}\text{ encircles any subset of }B(O_1,r_0)\cup B(O_2,r_0).} 

First choose $W_{1,0},W'_{1,0},W_{2,0},W'_{2,0}$ this way, only requiring that they are white. (As $a<L<b$, this is true for sufficiently small $s$ and~$t$.) 

Then, choose $W_{1,1},W'_{1,1}$ near $O_1$ and $W_{2,1},W'_{2,1}$ near $O_2$ such that \begin{itemize}
    \item their reflection points, the first two about $\ell_1$ and the last two about $\ell_2$, $B_{1,1},B'_{1,1},B_{2,1},B'_{2,1}$, are between $\ell_1,\ell_2$ and encircled by $W_{1,0},W'_{1,0},W_{2,0},W'_{2,0}$, \textit{and} 
    \item $\{W_{1,1},W'_{1,1},W_{2,1},W'_{2,1}\}$ also encircles points sufficiently close to $O_1,O_2$.
\end{itemize}

The first condition holds for the entire half-balls around $O_1,O_2$ (the half that is not between $\ell_1,\ell_2$); find $W_{1,1},W'_{1,1},W_{2,1},W'_{2,1}$ analogous to finding $W_{1,0},W'_{1,0},W_{2,0},W'_{2,0}$. This process can continue to assign $W_{1,i},W'_{1,i},W_{2,i},W'_{2,i},B_{1,i},B'_{1,i},B_{2,i},B'_{2,i}$ for every $i\in\N$. This collection of points satisfy the following: \begin{itemize}
    \item for all non-negative integers~$i$, $W_{1,i},W'_{1,i},W_{2,i},W'_{2,i}$ encircles $B_{1,i+1},B'_{1,i+1},B_{2,i+1},B'_{2,i+1}$, and
    \item for all non-negative integers~$i$, the rotation of $W_{1,i},W'_{1,i},B_{1,i},B'_{1,i}$ in counterclockwise direction by $2\pi/n$ about~$O$ is $B_{2,i},B'_{2,i},W_{2,i},W'_{2,i}$ in that order. That is, the points around $\ell_1$ have the same configuration as those around $\ell_2$, except that their colors are opposite.
\end{itemize}

The second property allows us to analogously define $W_{j,i},W'_{j,i},B_{j,i},B'_{j,i}$ around $\ell_j$ for $j=3,\dots,n$. The points around adjacent rays will have the same configuration but with opposite colors. By rotational symmetry, the first property becomes: For all odd $j\in[n]$,  and non-negative integers~$i$, the set of points $W_{j,i},W'_{j,i},W_{j+1,i},W'_{j+1,i}$ encircles $B_{j,i+1},B'_{j,i+1},B_{j+1,i+1},B'_{j+1,i+1}$.
Here $j+1$ is to $1$ when $j=n$.

Note that if we take $A$ to be $\overline{A}$ (i.e., flipping the colors) the argument above still holds, except that the parity of the ray indices will flip. Thus, for all non-negative integers~$i$, and for all $j\in[n]$, \eqn{
\begin{cases}
\{W_{j,i},W'_{j,i},W_{j+1,i},W'_{j+1,i}\}\text{ encircles }\{B_{j,i+1},B'_{j,i+1},B_{j+1,i+1},B'_{j+1,i+1}\} & (j\text{ is odd})\\
\{B_{j,i},B'_{j,i},B_{j+1,i},B'_{j+1,i}\}\text{ encircles }\{W_{j,i+1},W'_{j,i+1},W_{j+1,i+1},W'_{j+1,i+1}\} & (j\text{ is even}).
\end{cases}
}

For every non-negative integer~$i$, let $S_i\ :=\ \{W_{j,i},W'_{j,i},B_{j,i},B'_{j,i}:j\in[n]\}$. Since encirclement is closed under taking unions, we have that $S_i$ encircles $S_{i+1}$ for all non-negative integers~$i$.

If $A$ was drawable, for a non-negative integer~$i$ let $M_i:=\max_{x\in S_i}\stat(x)$. Thus $M_i$ is a positive integer. By Lemma~\ref{lem:maxsn}, \eqn{M_0\ >\ M_1\ >\ M_2\ >\ \cdots} which leads to contradiction. Hence $A$ is not drawable.
\end{proof}

\begin{proof}[Proof of Theorem \ref{thm:curv1}]
Return to {\snake}. By the construction (recall that $r:=1.001$), \begin{gather*}
    |BC|\ =\ 2r\tan30\degree\ =\ 2\sqrt3r,\\
    |AE|\ =\ \frac{|BC|}{2\cos 15\degree}-|BE|\ =\ \frac{4\sqrt3-\sqrt6-\sqrt2}{\sqrt6+\sqrt2}r\approx 0.793...\\
    |OE|\ =\ |AE|\cot15\degree\ =\ \frac{4\sqrt3-\sqrt6-\sqrt2}{\sqrt6-\sqrt2}r\approx 2.963...\\
    |OE'|\ =\ r\cot15\degree\ =\ \frac{\sqrt6+\sqrt2}{\sqrt6-\sqrt2}r\approx 3.735\dots
\end{gather*} 
We note that {\snake} is totally $12$-dissected at $(2.964, 3.735)$ with thickness $0.793$. In particular, $\cot(\pi/12)=2+\sqrt3\approx 3.732...$, so {\snake} is not drawable by Theorem~\ref{thm:totdisundraw}. By tweaking Definition \ref{defi:encircle} to deal with closed unit disks, the same argument shows that {\snake} is not closed-disk drawable. On the other hand, {\snake} is locally drawable (and locally closed-disk drawable) by Remark~\ref{rek:c2prob}.
\end{proof}
\begin{remark}
Theorem~\ref{thm:totdisundraw} can also be used to prove Theorem~\ref{thm:chessboard}: $[-c,0]^2\cup [0,c]^2$ is totally 4-dissected at $(0,c)$ with thickness~$c$, and $0<\cot(\pi/4)$.
\end{remark}

\section*{Acknowledgements}

The authors would like to thank Anton Bernshteyn, Clinton Conley and Junyao Peng 
for several helpful comments. FF was supported by NSF grant DMS 1855591 and a Sloan Research Fellowship. FP was supported by Carnegie Mellon's Summer Undergraduate Research Fellowship.

\bibliographystyle{plain}

\begin{thebibliography}{1}

\bibitem{balcerzak1999uncountable}
M.~Balcerzak and A.~Kharazishvili.
\newblock On uncountable unions and intersections of measurable sets.
\newblock {\em Georgian Mathematical Journal}, 6(3):201--212, 1999.

\bibitem{blaschke1916}
W.~Blaschke.
\newblock {\em {Kreis und Kugel}}.
\newblock Veit Leipzig, 1916.

\bibitem{doCarmo}
M.~Do~Carmo.
\newblock {\em Differential geometry of curves and surfaces}.
\newblock Courier Dover Publications, 2016.

\bibitem{dubins1957}
L.~Dubins.
\newblock On curves of minimal length with a constraint on average curvature,
  and with prescribed initial and terminal positions and tangents.
\newblock {\em American Journal of Mathematics}, 79(3):497--516, 1957.

\bibitem{keleti1999}
T.~Keleti.
\newblock {The Dynkin System Generated by the Large Balls of $\R^n$}.
\newblock {\em Real Analysis Exchange}, 24(2):859--866, 1999.

\bibitem{keleti2000}
T.~Keleti and D.~Preiss.
\newblock {The balls do not generate all Borel sets using complements and
  countable disjoint unions}.
\newblock {\em Mathematical Proceedings of the Cambridge Philosophical
  Society}, 128(3):539--547, 2000.

\bibitem{Srivastava_1998}
S.~M. Srivastava.
\newblock {\em A Course on Borel Sets}.
\newblock Springer Berlin Heidelberg, 1998.

\bibitem{Zeleny}
M.~Zelen\'{y}.
\newblock The {D}ynkin system generated by balls in {$\bold R^d$} contains all
  {B}orel sets.
\newblock {\em Proceedings of the American Mathematical Society},
  128(2):433--437, 2000.

\end{thebibliography}

\end{document}